\newtheorem{theorem}{Theorem}[section]
\newtheorem{proposition}[theorem]{Proposition}
\newtheorem{lemma}[theorem]{Lemma}
\newtheorem{corollary}[theorem]{Corollary}
\theoremstyle{definition}
\newtheorem{definition}[theorem]{Definition}
\theoremstyle{remark}
\newtheorem*{remark}{Remark}
\numberwithin{equation}{section}
\newcommand{\C}{\mbox{$\mathbb{C}$}}
\begin{document}

\author{Per \AA hag}
\address{Department of Mathematics and Mathematical Statistics\\ Ume\aa \ University\\ SE-901 87 Ume\aa \\ Sweden}
\email{Per.Ahag@math.umu.se}
\author{Rafa\l\ Czy{\.z}}
\address{Institute of Mathematics\\ Jagiellonian University\\ \L ojasiewicza 6\\ 30-348 Krak\'ow\\ Poland}
\email{Rafal.Czyz@im.uj.edu.pl}
\keywords{compact K\"ahler manifolds, complex Hessian equation, $(\omega,m)$-subharmonic functions, Sobolev type inequalities}
\subjclass[2010]{Primary 32U05, 31C45, 46E35; Secondary 53C55, 35J60.}

\title[A characterization of the degenerate complex Hessian equations]{A characterization of the degenerate complex Hessian equations for functions with bounded $(p,m)$-energy}

\begin{abstract} By proving an estimate of the sublevel sets for $(\omega,m)$-subharmonic functions we obtain a Sobolev type inequality that is then used to characterize the degenerate complex Hessian equations for such functions with bounded $(p,m)$-energy.
\end{abstract}

\maketitle

\begin{center}\bf
\today
\end{center}

\section{Introduction}

Ever since the 1930s when the interest in K\"ahler geometry gained momentum with the publication of Erich K\"ahler's article~\cite{Kahler}, the attention has been
immense both from mathematicians and physicists. Take, for example, the works of Aubin~\cite{Aubin} and Yau~\cite{Yau}, as well as
the highly regarded Seiberg-Witten theory~\cite{SeibergWitten1, SeibergWitten2} of physics. In the mentioned work of Aubin and Yau they showed how
geometric information of a K\"ahler manifold can be retrieved by solving certain partial differential equation of  Monge-Amp\`ere type.  This is part of the explanation of why these equations and associated methods have been of great interest in recent decades. Our motivation is instead from a pluripotential theoretical background and the highly influential work of Bedford and Taylor~\cite{BedfordTaylor1,BedfordTaylor2}, and Cegrell~\cite{CegrellPCE,CegrellGDM}.

 Combing the ideas of Cegrell's energy classes with globally defined plurisubharmonic functions known as $\omega$-plurisubharmonic functions
 Guedj and Zeriahi introduced and studied weighted energy classes of $\omega$-plurisubharmonic functions (\cite{GZ07}). In particular, they proved the existence of solutions to the Dirichlet problem for the complex Monge-Amp\`ere operator, and later Dinew proved the uniqueness (\cite{Dinew2}).

  Here we shall also use the idea of energy classes, but for the interpolation spaces of $m$-subharmonic functions. These spaces interpolate between subharmonic and plurisubharmonic functions, and the differential operator is the complex Hessian operator. The idea of these interpolation spaces goes back to Caffarelli et al.~\cite{CNS85}, and pluripotential methods were introduced by B\l ocki in~\cite{B}.

 The general setting of this paper is that $n\geq 2$, $p>0$, and  $1\leq m\leq n$. Furthermore, we shall use $(X,\omega)$ to denote a connected and compact K\"ahler manifold of complex dimension $n$, where $\omega$ is a K\"ahler form on $X$ such that $\int_{X}\omega^n=1$. The energy classes of $(\omega,m)$-subharmonic functions with bounded $(p, m)$-energy that is central for this paper is defined by
 \[
\mathcal E_m^p(X,\omega):=\left\{u\in \mathcal E_m(X,\omega): u\leq 0, e_{p,m}(u)<\infty\right\},
\]
where
\[
e_{p,m}(u)=\int_X(-u)^p\operatorname{H}_m(u),
\]
and $H_m$ denote the complex Hessian operator (see Section~\ref{sec_preliminaries} for details). For a historical account and references see e.g.~\cite{AC2019,LN2}.

\medskip

By proving in Lemma~\ref{cap} an estimate of the sublevel sets for $(\omega,m)$-subharmonic functions we obtain the following Sobolev type inequality.

\medskip

\noindent\textbf{Theorem~\ref{lemmaep}.} \emph{Let $n\geq 2$, $p>0$, and let $1\leq m\leq n$. Assume that $(X,\omega)$ is a connected and compact K\"ahler manifold of complex dimension $n$, where $\omega$ is a K\"ahler form on $X$ such that $\int_{X}\omega^n=1$. Furthermore, assume that $\mu$ is a Borel measure defined on $X$. Fix a constant $\beta$ such that $1>\beta>\max\Big(\frac {pn-n}{pn-n+m},\frac {p}{p+1}\Big)$, for $p>1$, and $\beta=\frac {p}{p+1}$ for $p\leq 1$. The following conditions are then equivalent:}
\begin{enumerate}\itemsep2mm
\item $\mathcal E_m^p(X,\omega)\subset L^q(X,\mu)$;
\item \emph{there exists a constant $C>0$ such that for all $u\in \mathcal E_m(X,\omega)\cap L^{\infty}(X)$ with $\sup_Xu=-1$ it holds}
\[
\int_X(-u)^q\,d\mu\leq Ce_{p,m}(u)^{\frac {q\beta}{p}};
\]
\item \emph{there exists a constant $C>0$ such that for all $u\in \mathcal E_m^p(X,\omega)$ with $\sup_Xu=-1$ it holds}
\[
\int_X(-u)^q\,d\mu\leq Ce_{p,m}(u)^{\frac {q\beta}{p}}.
\]
\end{enumerate}

\medskip

Theorem~\ref{lemmaep} is then used in Theorem~\ref{Dirichlet} to characterize the degenerate complex Hessian equation for $(\omega,m)$-subharmonic
functions with bounded $(p,m)$-energy. This equation was first considered for smooth solutions, and later for continuous functions (see e.g.~\cite{DK,LN,LN2} and references  therein). In~\cite{LN}, Lu and Nguyen \ recently solved the Dirichlet problem for the complex Hessian equation in $\mathcal E^1_m(X,\omega)$. In their paper, they used the variational method. By instead using our Sobolev type inequality we can in Theorem~\ref{Dirichlet} generalize Lu and Nguyen results to $p>0$.

\medskip

\noindent\textbf{Theorem~\ref{Dirichlet}.} \emph{Let $n\geq 2$, $p>0$, and let $1\leq m\leq n$. Assume that $(X,\omega)$ is a connected and compact K\"ahler manifold of complex dimension $n$, where $\omega$ is a K\"ahler form on $X$ such that $\int_{X}\omega^n=1$. Furthermore, assume that $\mu$ is a Borel probability measure defined on $X$. The following conditions
are then equivalent:}
\begin{enumerate}\itemsep2mm
\item $\mathcal E_m^p(X,\omega)\subset L^p(X,\mu)$;
\item \emph{there exists unique $(\omega,m)$-subharmonic function $u$ in $\mathcal E_m^p(X,\omega)$ such that $\sup_X u=-1$ and $\operatorname{H}_m(u)=\mu$.}
\end{enumerate}

\section{Preliminaries}\label{sec_preliminaries}

 Let $\Omega \subset \C^n$, $n\geq 2$, be a bounded domain, $1\leq m\leq n$, and define $\mathbb C_{(1,1)}$ to be the set of $(1,1)$-forms with constant coefficients. We then define
\[
\Gamma_m=\left\{\alpha\in \mathbb C_{(1,1)}: \alpha\wedge \beta^{n-1}\geq 0, \dots , \alpha^m\wedge \beta ^{n-m}\geq 0   \right\}\, ,
\]
where $\beta=dd^c|z|^2$ is the canonical K\"{a}hler form in $\C^n$.

\begin{definition}\label{m-sh} Let $n\geq 2$, and $1\leq m\leq n$. Assume that $\Omega \subset \C^n$ is a bounded domain, and let $u$ be a subharmonic function defined on $\Omega$. Then we say that $u$ is \emph{$m$-subharmonic} if the following inequality holds
\[
dd^cu\wedge\alpha_1\wedge\dots\wedge\alpha_{m-1}\wedge\beta^{n-m}\geq 0\, ,
\]
in the sense of currents for all $\alpha_1,\ldots,\alpha_{m-1}\in \Gamma_m$. With $\mathcal{SH}_m(\Omega)$ we denote the set of all $m$-subharmonic functions defined on $\Omega$.
\end{definition}

Let $\sigma_{k}$ be $k$-elementary symmetric polynomial of $n$-variable, i.e.,
\[
\sigma_{k}(x_1,\ldots,x_n)=\sum_{1\leq j_1<\cdots<j_k\leq n}x_{j_1}\cdots x_{j_k}\, .
\]
It can be proved that a function $u\in\mathcal C^2(\Omega)$ is $m$-subharmonic if, and only if,
\[
\sigma_k(u(z))=\sigma_k(\lambda_1(z),\ldots,\lambda_n(z))\geq 0,
\]
for all $k=1,\dots,m$, and all $z\in \Omega$. Here, $\lambda_1(z),\ldots,\lambda_n(z)$ are the eigenvalues of the complex Hessian matrix $\left [\frac {\partial ^2u}{\partial z_j\partial \bar z_k}(z)\right]$.

Next we shall consider compact K\"ahler manifold.

\begin{definition} Let $n\geq 2$, and let $1\leq m\leq n$. Assume that $(X,\omega)$ is a connected and compact K\"ahler manifold of complex dimension $n$, where $\omega$ is a K\"ahler form on $X$ such that $\int_{X}\omega^n=1$. A function $u: X\to \mathbb R\cup\{-\infty\}$ is called \emph{$(\omega,m)$-subharmonic} if in any local chart $\Omega$ of $X$, the function $f + u$ is $m$-subharmonic, where $f$ is a local potential of $\omega$. We shall denote by $\mathcal {SH}_m(X,\omega)$ the set of all $(\omega,m)$-subharmonic functions on $X$.
\end{definition}

 The following notation is convenient: for any $u\in \mathcal {SH}_m(X,\omega)$ let
\[
\omega_u=dd^cu+\omega.
\]
With this notation we have that a smooth function $u$ is $(\omega,m)$-subharmonic if, and only if,
\[
\omega_u^k \wedge \omega^{n-k} \geq 0 ,\qquad \text{ for all } k=1,\ldots,m.
\]

 In the following proposition we list useful properties of $(\omega,m)$-subharmonic functions. For proofs see e.g.~\cite{LN} and the references therein.

\begin{proposition}\label{prop}
Let $(X,\omega)$ be a compact K\"ahler manifold. Then
\begin{enumerate}\itemsep2mm
\item if $u,v\in \mathcal {SH}_m(X,\omega)$, $t\in [0,1]$, then $tu+(1-t)v\in \mathcal {SH}_m(X,\omega)$;
\item if $u\in \mathcal {SH}_m(X,\omega)$, $t\in [0,1]$, then $tu\in \mathcal {SH}_m(X,\omega)$;
\item if $u,v\in \mathcal {SH}_m(X,\omega)$, then $\max(u,v)\in \mathcal {SH}_m(X,\omega)$;
\item if $u_j\in \mathcal {SH}_m(X,\omega)$, $j\in \mathbb N$ then $(\sup_j u_j)^*\in \mathcal {SH}_m(X,\omega)$. Here $(\,)^*$ denotes the upper semicontinuous regularization;
\item if $u\in \mathcal {SH}_m(X,\omega)$, then there exists a decreasing sequence $u_j\in \mathcal {SH}_m(X,\omega)\cap \mathcal C^{\infty}(X)$ such that $u_j\to u$, $j\to \infty$.
\end{enumerate}
\end{proposition}

Following the idea from~\cite{GZ07} one can define the complex Hessian operator for $(\omega,m)$-subharmonic through the following construction.  First assume that $u\in \mathcal {SH}_m(X,\omega)\cap L^{\infty}(X)$, then
\[
\operatorname{H}_m(u):=\omega_u^m\wedge \omega^{n-m},
\]
which is a non-negative (regular) Borel measure on $X$. For an arbitrary (not necessarily bounded) $(\omega,m)$-subharmonic function $u$ let $u_j=\max(u,-j)$ be the canonical approximation of $u$. Then define
\[
\operatorname{H}_m(u):=\lim_{j\to \infty}\chi_{\{u>-j\}}\operatorname{H}_m(u_j).
\]
The complex Hessian operator is then used to construct the following function class.

\begin{definition}
Let $\mathcal E_m(X,\omega)$ be the class of all $(\omega,m)$-subharmonic functions defined as
\[
\mathcal E_m(X,\omega)=\left\{u\in \mathcal {SH}_m(X,\omega): \int_X\operatorname{H}_m(u)=1 \right\}.
\]
\end{definition}
\begin{remark}
Note that $u\in \mathcal E_m(X,\omega)$ if, and only if,
\[
\operatorname{H}_m(u_j)(\{u_j\leq -j\})\to 0, \qquad \text{as } j\to \infty.
\]
Here, $u_j=\max (u,-j)$.
\end{remark}

Let us collect some properties of the class $\mathcal E_m(X,\omega)$. Proofs can be found in~\cite{DL}.

\begin{theorem}\label{cp}
Let $(X,\omega)$ be a compact K\"ahler manifold.
\begin{enumerate}\itemsep2mm
\item If $u,v\in \mathcal E_m(X,\omega)$, $t\in [0,1]$, then $tu+(1-t)v\in \mathcal E_m(X,\omega)$, and $\max (u,v)\in \mathcal E_m(X,\omega)$. In particular, if $u,v\in \mathcal {SH}_m(X,\omega)$, $u\in \mathcal E_m(X,\omega)$ and $u\leq v$, then $v\in \mathcal E_m(X,\omega)$.

\item If $u_j\in \mathcal E_m(X,\omega)$ is decreasing sequence converging to $u\in \mathcal E_m(X,\omega)$, then
$\operatorname{H}_m(u_j)$ converges weakly to $\operatorname{H}_m(u)$.

\item If $u,v\in \mathcal E_m(X,\omega)$, then
\[
\chi_{\{u<v\}}\operatorname{H}_m(\max (u,v))=\chi_{\{u<v\}}\operatorname{H}_m(v).
\]
In particular, if $\operatorname{H}_m(u)\geq \mu$, $\operatorname{H}_m(v)\geq\mu$ for some Borel measure $\mu$, then $\operatorname{H}_m(\max (u,v))\geq \mu$.

\item (The comparison principle) Let $T$ be a positive current of the type
\[
T=\omega_{\psi_1}\wedge \dots\wedge \omega_{\psi_k}\wedge\omega^{n-m},
\]
where $k<m$, and $\psi_1,\dots,\psi_k\in \mathcal E_m(X,\omega)$. Then for $u,v\in \mathcal E_m(X,\omega)$ it holds
\[
\int_{\{u<v\}}\omega_v^{m-k}\wedge T\leq \int_{\{u<v\}}\omega_u^{m-k}\wedge T.
\]

\item (The Dirichlet problem for the complex Hessian operator) Let $\mu$ be a probability measure on $X$ that does not charge $m$-polar sets. Then there exists a unique function $u\in \mathcal E_m(X,\omega)$ such that $\operatorname{H}_m(u)=\mu$.
\end{enumerate}
\end{theorem}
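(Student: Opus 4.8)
The plan is to develop these properties in the logical order dictated by their dependencies rather than the order listed, building everything on two pillars: the Bedford--Taylor type convergence theory for the complex Hessian operator on bounded $(\omega,m)$-subharmonic functions, and the canonical truncation $u_j=\max(u,-j)$ together with the mass characterization recorded in the Remark. Since $X$ is compact and without boundary, Stokes' theorem gives $\int_X dd^c\varphi\wedge S=0$ for any closed positive current $S$ and bounded $(\omega,m)$-subharmonic $\varphi$; in particular the mixed masses $\int_X\omega_{u_1}\wedge\cdots\wedge\omega_{u_m}\wedge\omega^{n-m}$ are independent of the bounded functions $u_1,\dots,u_m$ and all equal $\int_X\omega^n=1$. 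This normalization is the engine behind the mass-conservation statements below.

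First I would prove (2). For uniformly bounded decreasing sequences this is the Bedford--Taylor continuity theorem, transported verbatim to the mixed wedge products $\omega_{u}^m\wedge\omega^{n-m}$ since the local computation is identical to the plurisubharmonic case. For a general decreasing sequence $u_j\downarrow u$ in $\mathcal E_m(X,\omega)$ I would combine the bounded case with the truncations: writing $u_j^{(k)}=\max(u_j,-k)$ and using the defining mass condition to control the loss on $\{u\le-k\}$, a diagonal argument upgrades bounded convergence to weak convergence of $\operatorname{H}_m(u_j)$ to $\operatorname{H}_m(u)$.

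Next, (3) is the locality of the Hessian measure: where $u<v$ one has $\max(u,v)=v$, so for bounded functions the Bedford--Taylor locality principle yields $\chi_{\{u<v\}}\operatorname{H}_m(\max(u,v))=\chi_{\{u<v\}}\operatorname{H}_m(v)$, and the unbounded case follows by truncation and (2); the ``in particular'' statement then drops out by splitting $X$ into $\{u<v\}$, $\{u>v\}$, and $\{u=v\}$ and comparing measures on each piece. With (3) and mass conservation in hand I would derive the comparison principle (4) by the standard envelope argument: apply the mass identity to $\max(u,v-\varepsilon)$, whose mixed Hessian equals $\omega_v^{m-k}\wedge T$ on $\{u<v-\varepsilon\}$ and $\omega_u^{m-k}\wedge T$ off $\{u\ge v-\varepsilon\}$, then let $\varepsilon\downarrow0$. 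Part (1) is then immediate: monotonicity (if $u\le v$ and $u\in\mathcal E_m(X,\omega)$ then $v\in\mathcal E_m(X,\omega)$) follows from $\{v\le-j\}\subset\{u\le-j\}$ and the comparison estimate, $\max$-stability is the special case $v=\max(u,v)$, and convexity follows by expanding $\omega_{tu+(1-t)v}^m$ binomially and invoking the mixed-mass equalities, whose binomial sum is $1$.

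The main obstacle is (5), the Dirichlet problem. For existence I would use the upper-envelope/balayage method: set $u=\big(\sup\{w\in\mathcal{SH}_m(X,\omega): \operatorname{H}_m(w)\ge\mu\}\big)^*$ and show $\operatorname{H}_m(u)=\mu$, the delicate points being that this envelope is genuinely $(\omega,m)$-subharmonic with full mass (so that $u\in\mathcal E_m(X,\omega)$) and that no mass escapes to $m$-polar sets, which is exactly where the hypothesis that $\mu$ does not charge $m$-polar sets is used. Uniqueness is where the comparison principle pays off again: if $\operatorname{H}_m(u)=\operatorname{H}_m(v)=\mu$, applying (4) on $\{u<v\}$ and $\{v<u\}$ forces these sets to be $\mu$-null, and the domination principle for $\mathcal E_m(X,\omega)$ upgrades this to $u=v$ after the normalization removing the additive constant. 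I expect the envelope regularity and the no-loss-of-mass step to be the technically heaviest part of the whole theorem.
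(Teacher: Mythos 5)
The paper does not prove this theorem at all: it is a compilation of known results, stated with the pointer ``Proofs can be found in~\cite{DL}'' (Dinew--Lu; the existence part of (5) is from Lu--Nguyen~\cite{LN}). So your proposal must be measured against those standard proofs. For parts (1)--(4) your roadmap is essentially the correct one and matches the literature: Bedford--Taylor convergence for bounded functions plus the canonical truncations $\max(u,-j)$, locality of $\operatorname{H}_m$ for (3), and the comparison principle (4) deduced from mass conservation on the compact manifold (Stokes) applied to $\max(u,v-\varepsilon)$. The convexity claim in (1) needs more care than ``the binomial sum is $1$'' --- one must verify the defining mass condition along the truncations of $tu+(1-t)v$, which requires estimates on the mixed masses $\omega_{u_j}^k\wedge\omega_{v_j}^{m-k}\wedge\omega^{n-m}$ over sublevel sets --- but this is a fixable technical point.

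The genuine gap is your uniqueness argument in (5). With $\operatorname{H}_m(u)=\operatorname{H}_m(v)=\mu$, part (4) with $k=0$ applied on $\{u<v\}$ reads
\[
\int_{\{u<v\}}\omega_v^{m}\wedge\omega^{n-m}\leq \int_{\{u<v\}}\omega_u^{m}\wedge\omega^{n-m},
\]
i.e.\ $\mu(\{u<v\})\leq\mu(\{u<v\})$, which is vacuous; it does not force $\{u<v\}$ to be $\mu$-null, so the hypothesis of the domination principle is never established. This is precisely where the compact setting differs from a bounded domain: there is no boundary data to separate two solutions, and uniqueness in energy classes is a deep theorem in its own right --- Dinew's separate paper~\cite{Dinew2} for the Monge--Amp\`ere case, and the mixed Hessian inequalities that are the main content of~\cite{DL} for the Hessian case. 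The actual route is quite different from comparison: one first shows that all mixed measures $\omega_u^k\wedge\omega_v^{m-k}\wedge\omega^{n-m}$ dominate, and then by the total-mass normalization equal, $\mu$, and a further argument yields that $u-v$ is constant (note that the statement can only hold up to an additive constant, since $\operatorname{H}_m(u+c)=\operatorname{H}_m(u)$; your parenthetical normalization remark gestures at this but the paper's phrasing glosses over it). Secondarily, your balayage-envelope existence proof differs from the published scheme --- Lu and Nguyen decompose $\mu=f\nu$ with $\nu$ dominated by capacity, solve for the truncated densities $c_j\min(f,j)\nu$, and pass to the limit, which is exactly the mechanism this paper replays inside its proof of Theorem~\ref{Dirichlet} --- and the two steps you flag as heavy ($\operatorname{H}_m(u)\leq\mu$ for the envelope and no loss of mass on $m$-polar sets) are left entirely unsupplied; but the decisive defect is the uniqueness step, which as written proves nothing.
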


We shall be in need of the $m$-capacity defined on a compact K\"ahler manifold $X$.

\begin{definition} For any Borel set $A\subset X$ define the \emph{$m$-capacity} of $A$ as
\[
\operatorname{cap}_m(A):=\sup\left \{\int_A\operatorname{H}_m(u): u\in \mathcal {SH}_m(X,\omega), \, -1\leq u\leq 0\right\}.
\]
We say that a Borel set $A\subset X$ is $m$-polar if $\operatorname{cap}_m(A)=0$.
\end{definition}

\begin{proposition}\label{prop1}
If $u\in \mathcal E_m(X,\omega)$, $u\leq 0$, then for any $t<0$ it holds
\[
\operatorname{cap}_m(\{u<-t\})\leq \frac {C}{t},
\]
where the constant $C$ does not depend on $u$.
\end{proposition}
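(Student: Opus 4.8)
The plan is to deduce the capacity estimate from a uniform ``energy'' bound by a Chebyshev-type argument. Fix $t>0$ and let $\psi\in\mathcal{SH}_m(X,\omega)$ with $-1\le\psi\le0$ be an arbitrary competitor in the definition of $\operatorname{cap}_m$. On the sublevel set $\{u<-t\}$ one has $-u>t$, hence
\[
\int_{\{u<-t\}}\operatorname{H}_m(\psi)\le\frac1t\int_{\{u<-t\}}(-u)\operatorname{H}_m(\psi)\le\frac1t\int_X(-u)\operatorname{H}_m(\psi).
\]
Taking the supremum over all such $\psi$ reduces the whole statement to producing a constant $C$, independent of $u$, with $\int_X(-u)\operatorname{H}_m(\psi)\le C$ for every competitor $\psi$.

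To obtain this bound I would first replace $u$ by its canonical truncations $u_j=\max(u,-j)$, which are bounded and decrease to $u$; by Theorem~\ref{cp}(2) the measures $\operatorname{H}_m(u_j)$ converge weakly and, together with monotone convergence, it suffices to bound $\int_X(-u_j)\operatorname{H}_m(\psi)$ uniformly in $j$. For bounded $u$ I would integrate by parts. Writing $\omega_\psi=dd^c\psi+\omega$ and using the self-adjointness $\int_X f\,dd^c g\wedge S=\int_X g\,dd^c f\wedge S$ for a closed positive current $S$ on the compact manifold $X$, each factor $\omega_\psi$ in $\int_X(-u)\,\omega_\psi^{\,k}\wedge\omega^{n-k}$ can be traded for a factor $\omega$ at the cost of transferring a derivative from $u$ onto $\psi$. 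Since $dd^c u=\omega_u-\omega$ and $0\le-\psi\le1$, the reduction from level $k$ to level $k-1$ peels off the two terms $\int_X(-\psi)\,\omega_u\wedge\omega_\psi^{\,k-1}\wedge\omega^{n-k}$ and $\int_X(-\psi)\,\omega_\psi^{\,k-1}\wedge\omega^{n-k+1}$. Each integrand here is a wedge of $n$ closed positive $(1,1)$-currents cohomologous to $\omega$, so by Stokes' theorem its total mass equals $\int_X\omega^n=1$; bounding $-\psi\le1$ therefore controls every peeled-off term by $1$. Iterating down on the power of $\omega_\psi$ leaves the single genuinely $u$-dependent term $\int_X(-u)\,\omega^n$.

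It remains to bound $\int_X(-u)\,\omega^n$ uniformly, which is exactly the standard uniform $L^1$-estimate for $(\omega,m)$-subharmonic functions normalized by $\sup_X u=0$ (compactness of the normalized family in $L^1(\omega^n)$); it supplies a constant depending only on $(X,\omega)$ and $m$. Combining the three steps yields $\int_X(-u)\operatorname{H}_m(\psi)\le C$ with $C=C(n,m,X)$, and hence $\operatorname{cap}_m(\{u<-t\})\le C/t$. I expect the two delicate points to be the uniform $L^1$ base estimate and the rigorous justification of the integration by parts for unbounded $u$; both are handled through the truncations $u_j$ together with the weak continuity of the Hessian operator from Theorem~\ref{cp}(2) and the cohomological invariance of the mixed Hessian masses.
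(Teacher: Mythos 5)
Your proposal is correct in substance, but there is nothing in the paper to measure it against: Proposition~\ref{prop1} is stated without proof (it is a known estimate, cf.\ \cite{GZ07} for $m=n$ and \cite{DL,LN} for the Hessian setting). Your argument --- Chebyshev on $\{u<-t\}$ followed by the uniform bound $\int_X(-u)\operatorname{H}_m(\psi)\le C$ over all competitors $\psi$ --- is the classical proof, and the details you sketch do work: for bounded $u$ and $\psi$ the self-adjointness step is legitimate Bedford--Taylor calculus; writing $dd^cu=\omega_u-\omega$, the peeling identity reads
\[
\int_X(-u)\,\omega_\psi^{k}\wedge\omega^{n-k}=\int_X(-u)\,\omega_\psi^{k-1}\wedge\omega^{n-k+1}+\int_X(-\psi)\,(\omega_u-\omega)\wedge\omega_\psi^{k-1}\wedge\omega^{n-k},
\]
and since $0\le-\psi\le1$ and every mixed product of $\omega_u$, $\omega_\psi$ and $\omega$ has total mass $\int_X\omega^n=1$ by Stokes, each peel costs at most $1$, giving $\int_X(-u)\operatorname{H}_m(\psi)\le\int_X(-u)\,\omega^n+m$. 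Two small inaccuracies: the $dd^c$ is transferred from $\psi$ onto $u$, not from $u$ onto $\psi$; and your appeal to Theorem~\ref{cp}(2) is superfluous, since the measure $\operatorname{H}_m(\psi)$ is fixed and $-u_j\nearrow-u$, so monotone convergence alone handles the truncation limit. It is also worth noting that the same estimate follows by the method the paper does use in Lemma~\ref{cap} and Theorem~\ref{thm_lq}: from the inclusions $\{u<-2t\}\subset\{u/t<\psi-1\}\subset\{u<-t\}$, the comparison principle of Theorem~\ref{cp}(4), and the binomial expansion of $\omega_{u/t}^m\wedge\omega^{n-m}$, whose mixed terms all have mass $1$, one gets $\operatorname{cap}_m(\{u<-2t\})\le V(\{u<-t\})+Ct^{-1}$; both routes rest on the same two pillars, the cohomological mass normalization and the uniform estimate $\int_X(-u)\,\omega^n\le C'$ from~\cite{L2}.

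The one point you must make explicit is the normalization you slipped in when invoking that $L^1$ estimate (``normalized by $\sup_Xu=0$''). As printed, the proposition assumes only $u\le0$ (and contains the typo ``$t<0$'' for $t>0$), and in that generality no proof can succeed: for the constants $u\equiv-M\in\mathcal E_m(X,\omega)$ one has $\operatorname{cap}_m(\{u<-t\})=\operatorname{cap}_m(X)=1$ for all $t<M$, which is incompatible with a bound $C/t$ with $C$ independent of $u$. So your proof is correct precisely under the intended reading $\sup_Xu=0$ (or $\sup_Xu=-1$, as in Theorem~\ref{sobolev} and Theorem~\ref{lemmaep}), and you should state that hypothesis at the point where you use the uniform $L^1$ bound.
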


%Later on we shall provide better estimation of the capacity of the sublevel set for $m$-subharmonic functions.
%
%We shall frequently use the following formula
%\[
%\int_Xf^p\,d\mu=p\int_{0}^{\infty}t^{p-1}\mu(\{f<-t\})\,dt,
%\]
%where $f\leq 0$ on $X$, $p>0$ and $\mu$ is a measure on $X$.
%If we take $\mu=\omega^n=:V$ as a volume form then
%\[
%\int_Xf^p\,\omega^n=p\int_{0}^{\infty}t^{p-1}V(\{f<-t\})\,dt.
%\]

A central part of the proof of Lemma~\ref{lq} is the following estimate due to Dinew and Ko\l odziej~\cite{DK}.

\begin{lemma}\label{dk}
For any $1<\alpha<\frac {n}{n-m}$ there exits a constant $C(\alpha)>0$ such that for any Borel set $A\subset X$ it holds
\[
V(A)\leq C(\alpha)\operatorname{cap}_m(A)^{\alpha},
\]
where $V(A)=\int_A\omega^n$.
\end{lemma}

\section{Functions with bounded $(p,m)$-energy}

In this section we focus on $(\omega,m)$-subharmonic functions with bounded $(p,m)$-energy, and prove some necessary properties that is needed
for the rest of this paper.

\begin{definition}\label{defEpm} Let $n\geq 2$, $p>0$, and let $1\leq m\leq n$. Assume that $(X,\omega)$ is a connected and compact K\"ahler manifold of complex dimension $n$, where $\omega$ is a K\"ahler form on $X$ such that $\int_{X}\omega^n=1$. We define the class of \emph{$(\omega,m)$-subharmonic functions with bounded $(p,m)$-energy} as
\[
\mathcal E_m^p(X,\omega):=\left\{u\in \mathcal E_m(X,\omega): u\leq 0, e_{p,m}(u)<\infty\right\},
\]
where
\[
e_{p,m}(u)=\int_X(-u)^p\operatorname{H}_m(u).
\]
\end{definition}
\begin{remark}
It was proved in~\cite{GZ07,LN} (see also~\cite{CegrellPCE,Dinew}) that $u\in \mathcal E_m^p(X,\omega)$ if, and only if, $\sup_je_{p,m}(u_j)<\infty$, where $u_j=\max (u,-j)$ is the canonical approximation of $u$. Furthermore,
$e_{p,m}(u_j)\to e_{p,m}(u)$ as $j\to \infty$.
\end{remark}

\begin{lemma}\label{4}
Let $(X,\omega)$ be a compact K\"ahler manifold, and $p>0$. Furthermore, let $1\leq j\leq m$, and  let $T$ be a positive current of the type
\[
T=\omega_{\psi_1}\wedge \dots\wedge \omega_{\psi_{m-j}}\wedge\omega^{n-m},
\]
where $\psi_1,\dots,\psi_{m-j}\in \mathcal E_m(X,\omega)$. Then for any $u,v\in \mathcal E_m(X,\omega)\cap L^{\infty}(X)$, $u,v\leq 0$, it holds
\[
\int_X(-u)^p\omega_v^j\wedge T\leq 2^p \int_X(-u)^p\omega_u^j\wedge T+2^p\int_X(-v)^p\omega_v^j\wedge T.
\]
\end{lemma}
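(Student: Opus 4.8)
The plan is to exploit the locality of the complex Hessian operator in order to reduce the inequality to a weighted comparison on a single sublevel set, and then to run a Cegrell-type integration by parts. Throughout, since $u,v\in\mathcal E_m(X,\omega)\cap L^{\infty}(X)$, all the measures $\omega_u^i\wedge\omega_v^{j-i}\wedge T$ have finite total mass, so every integral below is finite; and, $X$ being compact without boundary, Stokes' theorem lets me integrate by parts freely. I shall use two elementary scalar inequalities, valid for $a,b\ge 0$ and $p>0$: first $\max(a,b)^p=\max(a^p,b^p)\le a^p+b^p$, and secondly $(a+b)^p\le 2^p(a^p+b^p)$, which is where the constant $2^p$ originates.

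Write $w=\max(u,v)\in\mathcal E_m(X,\omega)\cap L^{\infty}(X)$ (Theorem~\ref{cp}(1)) and split
\[
\int_X(-u)^p\omega_v^j\wedge T=\int_{\{u\ge v\}}(-u)^p\omega_v^j\wedge T+\int_{\{u<v\}}(-u)^p\omega_v^j\wedge T.
\]
On $\{u\ge v\}$ one has $-u\le -v$, so the first integral is at most $\int_X(-v)^p\omega_v^j\wedge T$, which is dominated by the second term on the right-hand side. For the second integral, note that $\{u<v\}=\{u<w\}$ and that, by locality of the Hessian operator (the product form of Theorem~\ref{cp}(3)), $\omega_v^j\wedge T=\omega_w^j\wedge T$ on $\{u<v\}$. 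Hence $\int_{\{u<v\}}(-u)^p\omega_v^j\wedge T\le\int_X(-u)^p\omega_w^j\wedge T$, and it suffices to prove the weighted comparison
\[
\int_X(-u)^p\omega_w^j\wedge T\le 2^p\int_X(-u)^p\omega_u^j\wedge T,
\]
in which $w\ge u$ on all of $X$.

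To establish this, I telescope $\omega_w^j-\omega_u^j=dd^c(w-u)\wedge S$ with $S=\sum_{i=0}^{j-1}\omega_w^i\wedge\omega_u^{j-1-i}\wedge T\ge 0$, so that after integrating by parts
\[
\int_X(-u)^p\left(\omega_w^j-\omega_u^j\right)\wedge T=\int_X(w-u)\,dd^c\!\left((-u)^p\right)\wedge S.
\]
Using $dd^c\!\left((-u)^p\right)=-p(-u)^{p-1}\omega_u+p(-u)^{p-1}\omega+p(p-1)(-u)^{p-2}\,du\wedge d^cu$ together with $w-u\ge 0$, the $\omega_u$ term contributes nonpositively, and it remains to bound the contributions of the bare form $\omega$ and of the nonnegative gradient current $du\wedge d^cu$ by $(2^p-1)\int_X(-u)^p\omega_u^j\wedge T$. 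This absorption is the main obstacle: I expect to handle it by a Cauchy--Schwarz inequality for the positive current $S$, converting the $du\wedge d^cu$ terms back into $\omega_u$-masses weighted by $(-u)^p$, and by dominating the $\omega$-term via $\omega\le\omega_u+\omega_v$ and the scalar inequalities above. It is precisely this step, rather than the bookkeeping, that produces the constant $2^p$; the cases $p\ge 1$ and $0<p<1$ differ only in which form of the elementary power inequality is invoked (and in the sign of the $p(p-1)$ gradient term, which is favourable when $p<1$), and are otherwise identical.
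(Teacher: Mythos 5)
There is a genuine gap, and it sits exactly where you flag it yourself: the ``absorption'' step. After your reduction, the whole lemma hangs on the unproved claim
\[
\int_X(-u)^p\,\omega_w^j\wedge T\leq 2^p\int_X(-u)^p\,\omega_u^j\wedge T,\qquad w=\max(u,v)\geq u,
\]
and your plan for it (Cauchy--Schwarz on the gradient current, $\omega\leq\omega_u+\omega_v$ for the bare-form term) is a hope, not an argument. If you actually carry out the integration by parts you set up, what comes out is the paper's Lemma~\ref{7}: peeling off one factor of $\omega_w$ at a time costs a factor $(p+1)$, so iterating over the $j$ factors yields the constant $(p+1)^j$, not $2^p$ --- and there is no evident way to recover $2^p$ from this route (for $p=1$, $j=2$ your intermediate claim asserts constant $2$ where the telescoping argument gives $4$; I see no proof that $2$ suffices). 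A further technical debt: your replacement of $\omega_v^j\wedge T$ by $\omega_w^j\wedge T$ on $\{u<v\}$ invokes a mixed-current version of Theorem~\ref{cp}(3), which the paper states only for the full Hessian $\operatorname{H}_m$; the mixed version is true for bounded functions but would itself need justification, since $\{u<v\}$ is merely plurifine open, not open.

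The paper's proof avoids integration by parts entirely, and this is where $2^p$ genuinely comes from. One starts from the elementary inclusion $\{u<-2s\}\subset\{u<v-s\}\cup\{v<-s\}$, applies the comparison principle (Theorem~\ref{cp}(4)) to the pair $u$ and $v-s$ (noting $\omega_{v-s}=\omega_v$, and $\{u<v-s\}\subset\{u<-s\}$ since $v\leq 0$) to get
\[
(\omega_v^j\wedge T)(\{u<v-s\})\leq(\omega_u^j\wedge T)(\{u<-s\}),
\]
and then runs the layer-cake formula $\int_X(-u)^p\,\omega_v^j\wedge T=p\int_0^\infty t^{p-1}(\omega_v^j\wedge T)(\{u<-t\})\,dt$ with the substitution $t=2s$; the factor $2^p$ is precisely the Jacobian of that substitution, applied to both resulting terms. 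So the constant is produced by measuring sublevel sets, not by pointwise current comparison; your decomposition into $\{u\geq v\}$ and $\{u<v\}$ discards the dilation structure that makes the sharp constant appear. If you want to salvage your approach, prove the lemma with the weaker constant $(p+1)^j$ via Lemma~\ref{7} (which suffices for all later applications in the paper where only \emph{some} constant is needed), but as a proof of the stated inequality with $2^p$ the proposal is incomplete.
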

\begin{proof} Note that we have the following $\{u<-2s\}\subset \{u<v-s\}\cup\{v<-s\}$, and therefore  by Theorem~\ref{cp} (4)
\[
\int_{\{u<v-s\}}\omega_v^j\wedge T\leq \int_{\{u<v-s\}}\omega_u^j\wedge T\leq \int_{\{u<-s\}}\omega_u^j\wedge T,
\]
and then
\begin{multline*}
\int_X(-u)^p\omega_v^j\wedge T=p\int_0^{\infty}t^{p-1}(\omega_v^j\wedge T)(\{u<-t\})\,dt\\
=p2^p\int_0^{\infty}s^{p-1}(\omega_v^j\wedge T)(\{u<-2s\})\,ds\\
\leq p2^p\left(\int_0^{\infty}s^{p-1}(\omega_v^j\wedge T)(\{u<v-s\})\,ds+ \int_0^{\infty}s^{p-1}(\omega_v^j\wedge T)(\{v<-s\})\,ds\right)\\
\leq p2^p\left(\int_0^{\infty}s^{p-1}(\omega_u^j\wedge T)(\{u<-s\})\,ds+ \int_0^{\infty}s^{p-1}(\omega_v^j\wedge T)(\{v<-s\})\,ds\right)\\
=2^p\left(\int_X(-u)^p\omega_u^j\wedge T+\int_X(-v)^p\omega_v^j\wedge T\right).
\end{multline*}
\end{proof}

\begin{lemma}\label{7}
Let $(X,\omega)$ be a compact K\"ahler manifold, and $p>0$.  Let $T$ be a positive current of the type
\[
T=\omega_{\psi_1}\wedge \dots\wedge \omega_{\psi_{m-1}}\wedge\omega^{n-m},
\]
where $\psi_1,\dots,\psi_{m-1}\in \mathcal E_m(X,\omega)$. Then for any $u,v\in \mathcal E_m(X,\omega)\cap L^{\infty}(X)$ such that $u\leq v\leq 0$ it holds
\begin{equation}\label{eq3}
\int_X(-u)^p\omega_{v}\wedge T\leq (p+1)\int_X(-u)^p\omega_{u}\wedge T.
\end{equation}
In particular, if $u,v\in \mathcal E_m^p(X,\omega)$ are such that $u\leq v\leq 0$, then
\[
e_{p,m}(v)\leq (p+1)^me_{p,m}(u).
\]
\end{lemma}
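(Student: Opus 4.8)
The plan is to prove the integral inequality \eqref{eq3} first, and then derive the energy estimate by iterating it $m$ times along a chain of currents. For \eqref{eq3}, the natural strategy is integration by parts. Since $u \le v \le 0$, I would write $\omega_v = \omega_u + dd^c(v-u)$, where $g := v-u \ge 0$ is a bounded nonnegative function. The difference between the two sides of \eqref{eq3} is then, up to the constant, controlled by an integral involving $dd^c g \wedge T$. The idea is to integrate by parts to move the $dd^c$ off $g$ and onto the factor $(-u)^p$, exploiting that $(-u)^p$ is (a power of) an $(\omega,m)$-subharmonic-type function so that $dd^c(-u)^p$ produces a sign-definite contribution. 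Concretely, I expect a computation of the shape
\[
\int_X (-u)^p\,\omega_u \wedge T - \int_X (-u)^p\,\omega_v \wedge T = -\int_X (-u)^p\, dd^c g \wedge T = -\int_X g\, dd^c\big((-u)^p\big)\wedge T,
\]
and the task is to show the right-hand side is bounded below by $-p\int_X(-u)^p\,\omega_u\wedge T$, which rearranges to \eqref{eq3}.

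To make this precise I would expand $dd^c\big((-u)^p\big)$ via the chain rule. Writing $\phi(t)=(-t)^p$ for $t\le 0$, one has $dd^c\,\phi(u) = \phi'(u)\,dd^c u + \phi''(u)\,du\wedge d^c u$. Here $\phi'(u) = -p(-u)^{p-1} \le 0$ and $\phi''(u) = p(p-1)(-u)^{p-2}$; the gradient term $du\wedge d^c u \ge 0$ is a positive $(1,1)$-form. Pairing against $g\ge 0$ and the positive current $T$, and using $dd^c u = \omega_u - \omega$, I would extract the dominant term $-p\int_X g(-u)^{p-1}\omega_u\wedge T$ and argue the remaining terms have a favorable sign or are absorbed, finally estimating $g = v-u \le -u$ (since $v\le 0$) to replace $g(-u)^{p-1}$ by $(-u)^p$. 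This substitution is what converts the weight $(-u)^{p-1}$ back into $(-u)^p$ and produces the factor $(p+1)$.

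For the second assertion, I would apply \eqref{eq3} iteratively. Starting from $e_{p,m}(v)=\int_X(-v)^p\omega_v^m\wedge\omega^{n-m}$, I first bound $(-v)^p \le (-u)^p$ (as $u\le v\le 0$), then peel off one factor of $\omega_v$ at a time using \eqref{eq3} with the current $T=\omega_v^{\,j-1}\wedge\omega_u^{\,m-j}\wedge\omega^{n-m}$ at each stage, each step gaining one factor of $(p+1)$ and trading an $\omega_v$ for an $\omega_u$. After $m$ steps all $\omega_v$ factors become $\omega_u$, giving $e_{p,m}(v)\le (p+1)^m e_{p,m}(u)$. The main obstacle is the integration-by-parts step in \eqref{eq3}: one must justify it rigorously for merely bounded (not smooth) functions in $\mathcal E_m(X,\omega)$, handle the potentially singular weight $(-u)^{p-1}$ when $0<p<1$, and correctly track the sign of the Hessian-gradient term $\phi''(u)\,du\wedge d^c u\wedge T$. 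I would address the regularity issue by approximating via Proposition~\ref{prop}(5) with smooth decreasing sequences and passing to the limit, and handle the weight by the standard truncation/level-set representation already used in Lemma~\ref{4}.
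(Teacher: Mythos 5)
Your proposal follows essentially the same route as the paper's proof: after reducing to smooth $u$ with $u\leq v<0$ via Proposition~\ref{prop}, integrate by parts on the closed manifold, expand $dd^c(-u)^p$ by the chain rule, use sign considerations together with $-v\leq -u$ (your $g\leq -u$) to absorb all terms into $(p+1)\int_X(-u)^p\omega_u\wedge T$, and finally iterate \eqref{eq3} $m$ times through mixed currents to get $e_{p,m}(v)\leq (p+1)^m e_{p,m}(u)$. The only cosmetic difference is your splitting $\omega_v=\omega_u+dd^c(v-u)$ versus the paper's $\omega_v=\omega+dd^c v$, and the sign issue you flag for the gradient term $p(p-1)(-u)^{p-2}du\wedge d^c u$ when $p\geq 1$ is resolved exactly as the paper does it, by the Stokes identity $\int_X(-u)^p\omega_u\wedge T=\int_X(-u)^p\omega\wedge T+p\int_X(-u)^{p-1}du\wedge d^c u\wedge T$.
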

\begin{proof} From Proposition~\ref{prop} it follows that we can assume that $u$ is smooth and $u\leq v<0$.

\medskip

\noindent\emph{Case 1: ($p\geq 1$).} We have
\begin{equation}\label{eq1}
\int_X(-u)^p\omega_{v}\wedge T=\int_X(-u)^p\omega\wedge T+\int_Xvdd^c(-u)^p\wedge T=I_1+I_2.
\end{equation}
Note that
\begin{multline}\label{eq2}
I_1=\int_X(-u)^p\omega\wedge T\leq \int_X(-u)^p\omega\wedge T+p\int_X(-u)^{p-1}du\wedge d^cu\wedge T\\ =\int_X(-u)^p\omega_u\wedge T,
\end{multline}
and
\[
dd^c(-u)^p=p(p-1)(-u)^{p-2}du\wedge d^cu-p(-u)^{p-1}dd^cu\geq -p(-u)^{p-1}dd^cu.
\]
The integral $I_2$ can be estimated as follows
\begin{multline}\label{eq4}
I_2=\int_Xvdd^c(-u)^p\wedge T\leq p \int_X(-v)(-u)^{p-1}dd^cu\wedge T\\ \leq p\int_X(-v)(-u)^{p-1}\omega_u\wedge T
\leq p\int_X(-u)^{p}\omega_u\wedge T.
\end{multline}
Combining the inequalities (\ref{eq1}), (\ref{eq2}) and (\ref{eq4}) we get (\ref{eq3}).

\medskip

\noindent\emph{Case 2: ($0<p<1$).} We have by (\ref{eq2})
\begin{multline*}
\int_X(-u)^p\omega_{v}\wedge T=\int_X(-u)^p\omega\wedge T+\int_X(-v)dd^c(-(-u)^p)\wedge T\\
\leq \int_X(-u)^p\omega\wedge T+\int_X(-v)\left[p(-u)^{p-1}\omega+dd^c(-(-u)^p)\right]\wedge T\\
\leq \int_X(-u)^p\omega\wedge T+\int_X(-u)\left[p(-u)^{p-1}\omega+dd^c(-(-u)^p)\right]\wedge T\\
=p\int_X(-u)^p\omega\wedge T+\int_X(-u)^p\omega_u\wedge T\leq (p+1)\int_X(-u)^p\omega_u\wedge T.
\end{multline*}

The last statement of this lemma follows from the canonical approximation, and inequality (\ref{eq3}) applied $m$-times
\begin{multline*}
e_{p,m}(v)=\int_{X}(-v)^p\omega_v^m\wedge \omega^{n-m}\leq \int_{X}(-u)^p\omega_v^m\wedge \omega^{n-m}\\
\leq (p+1)\int_{X}(-u)^p\omega_u\wedge \omega_v^{m-1}\wedge \omega^{n-m}\leq \dots\leq (p+1)^m\int_{X}(-u)^p\omega_u^m\wedge \omega^{n-m}\\
=(p+1)^me_{p,m}(u).
\end{multline*}
\end{proof}

\begin{corollary}\label{cor1}
Let $(X,\omega)$ be a compact K\"ahler manifold, and $p>0$. The following conditions are then equivalent
\begin{enumerate}\itemsep2mm
\item $u\in \mathcal {E}_m^p(X,\omega)$;
\item for any decreasing sequence $u_j\in \mathcal {E}_m^p(X,\omega)$, $u_j\searrow u$ we have
\[
\sup_je_{p,m}(u_j)<\infty;
\]
\item there exists a decreasing sequence $u_j\in \mathcal {E}_m^p(X,\omega)$, $u_j\searrow u$ such that
\[
\sup_je_{p,m}(u_j)<\infty.
\]
\end{enumerate}
\end{corollary}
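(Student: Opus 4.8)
The plan is to prove the cyclic chain of implications $(1)\Rightarrow(2)\Rightarrow(3)\Rightarrow(1)$, under the understanding implicit in $(2)$ and $(3)$ that $u\in\mathcal E_m(X,\omega)$ with $u\leq 0$, so that the canonical approximation is available and the sequences considered actually exist. The workhorse throughout is the monotonicity estimate of Lemma~\ref{7}, which compares the $(p,m)$-energies of two ordered functions lying in $\mathcal E_m^p(X,\omega)$.

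For $(1)\Rightarrow(2)$, let $u_j\in\mathcal E_m^p(X,\omega)$ with $u_j\searrow u$. Then $u\leq u_j\leq 0$ and both functions lie in $\mathcal E_m^p(X,\omega)$, so Lemma~\ref{7} gives $e_{p,m}(u_j)\leq(p+1)^m e_{p,m}(u)$ for every $j$. Since $e_{p,m}(u)<\infty$ by hypothesis, this yields $\sup_j e_{p,m}(u_j)\leq(p+1)^m e_{p,m}(u)<\infty$. The implication $(2)\Rightarrow(3)$ is then immediate: the canonical approximation $u_j=\max(u,-j)$ is a decreasing sequence of bounded functions in $\mathcal E_m(X,\omega)$, hence in $\mathcal E_m^p(X,\omega)$, with $u_j\searrow u$; applying the universally quantified statement $(2)$ to this particular sequence gives $\sup_j e_{p,m}(u_j)<\infty$, which is $(3)$.

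The substance is in $(3)\Rightarrow(1)$. Fix a decreasing sequence $u_j\in\mathcal E_m^p(X,\omega)$ with $u_j\searrow u$ and set $M:=\sup_j e_{p,m}(u_j)<\infty$. The aim is to bound the energies of the canonical approximation $\max(u,-k)$ of $u$ uniformly in $k$ and then invoke the Remark after Definition~\ref{defEpm}. For fixed $k$ and $j$ one has $u_j\leq\max(u_j,-k)\leq 0$ with both functions in $\mathcal E_m^p(X,\omega)$ (the truncation is bounded and dominates $u_j\in\mathcal E_m$, hence lies in $\mathcal E_m$ by Theorem~\ref{cp}~(1)), so Lemma~\ref{7} yields
\[
e_{p,m}\bigl(\max(u_j,-k)\bigr)\leq(p+1)^m e_{p,m}(u_j)\leq(p+1)^m M.
\]
Letting $j\to\infty$ with $k$ fixed, the bounded sequence $\max(u_j,-k)$ decreases to $\max(u,-k)$, so $\operatorname{H}_m(\max(u_j,-k))\to\operatorname{H}_m(\max(u,-k))$ weakly by Theorem~\ref{cp}~(2), while the integrands $(-\max(u_j,-k))^p$ increase to $(-\max(u,-k))^p$. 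A Fatou-type lower semicontinuity argument then gives $e_{p,m}(\max(u,-k))\leq(p+1)^m M$. Since $\max(u,-k)$ is precisely the canonical approximation of $u$, the uniform bound $\sup_k e_{p,m}(\max(u,-k))\leq(p+1)^m M<\infty$ together with the Remark after Definition~\ref{defEpm} forces $u\in\mathcal E_m^p(X,\omega)$.

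I expect the only genuine obstacle to be the passage $j\to\infty$ in $(3)\Rightarrow(1)$, namely establishing $e_{p,m}(\max(u,-k))\leq\liminf_j e_{p,m}(\max(u_j,-k))$. One cannot simply integrate the weak limit of the measures against the pointwise limit of the integrands; instead I would use that each $(-\max(u_i,-k))^p$ is lower semicontinuous (as $\max(u_i,-k)$ is upper semicontinuous) and that these integrands increase in $i$, so that for $j\geq i$ the integral $\int(-\max(u_j,-k))^p\operatorname{H}_m(\max(u_j,-k))$ dominates $\int(-\max(u_i,-k))^p\operatorname{H}_m(\max(u_j,-k))$; lower semicontinuity under weak convergence with $i$ fixed, followed by $i\to\infty$ via monotone convergence, yields the claim. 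This is standard but is the one place where working with the bounded truncations, rather than with the possibly unbounded $u_j$, is essential.
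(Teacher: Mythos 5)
Your proof is correct, but it is organized differently from the paper's, and in one respect it is more self-contained. The paper settles the equivalence (1)$\Leftrightarrow$(3) by appealing directly to the remark after Definition~\ref{defEpm} (i.e., to the results of~\cite{GZ07,LN} on the canonical approximation), notes (2)$\Rightarrow$(3) is immediate, and proves (3)$\Rightarrow$(2) by comparing an arbitrary decreasing sequence $v_j\searrow u$ with the given one: it claims that for each $j$ there exists $k_j$ with $v_j\geq u_{k_j}$ and then invokes Lemma~\ref{7}. You instead run the cycle (1)$\Rightarrow$(2)$\Rightarrow$(3)$\Rightarrow$(1): your (1)$\Rightarrow$(2) is a one-line application of the monotonicity in Lemma~\ref{7} (since $u\leq u_j\leq 0$ with both in $\mathcal E_m^p(X,\omega)$), and the real work is your (3)$\Rightarrow$(1), where you truncate, bound $e_{p,m}(\max(u_j,-k))\leq (p+1)^m M$ via Lemma~\ref{7}, and pass to the limit $j\to\infty$ using Theorem~\ref{cp}~(2) together with the portmanteau/monotone-convergence argument you spell out (lower semicontinuity of the increasing lower semicontinuous integrands against weakly convergent measures), before applying the remark to the canonical approximation of $u$. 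This buys two things. First, you give an internal proof of the nontrivial direction (3)$\Rightarrow$(1), which the paper outsources to the cited literature. Second, you sidestep the paper's domination claim $v_j\geq u_{k_j}$, which as stated is delicate: a Dini-type argument works when comparing against \emph{continuous} majorants, but $v_j$ is merely upper semicontinuous, and the claim can literally fail --- take $u_k=u+\tfrac1k$ (with $u\leq -1$) and $v_j=u$, where no $k_j$ exists even though the desired energy bound is trivially true; the honest repair of that step is precisely your implication (1)$\Rightarrow$(2), applied after $u\in\mathcal E_m^p(X,\omega)$ is known. Your Fatou-type justification of $e_{p,m}(\max(u,-k))\leq\liminf_j e_{p,m}(\max(u_j,-k))$ is the standard and correct way to handle the one genuinely technical point, and your observation that one must work with the bounded truncations rather than the $u_j$ themselves is exactly right.
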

\begin{proof}
The equivalence (1)$\Leftrightarrow$(3) follows from the remark after Definition~\ref{defEpm}, and implication (2)$\Rightarrow$(3) is immediate.  Finally, we prove (3)$\Rightarrow$(2). Assume that there exists a decreasing sequence $u_j\in \mathcal {E}_m^p(X,\omega)$, $u_j\searrow u$ such that
\[
\sup_je_{p,m}(u_j)<\infty,
\]
and let $v_j$ be any sequence decreasing to $u$. Then for any $j$ there exists $k_j$ such that $v_j\geq u_{k_j}$. Therefore by Lemma~\ref{7} the sequence $e_{p,m}(v_j)$ is also bounded.
\end{proof}

\begin{lemma}\label{lem7}
Let $(X,\omega)$ be a compact K\"ahler manifold, and $p>0$. Then there exists a constant $C>0$ such that for any $u_0,u_1,\dots,u_m\in \mathcal E_m^p(X,\omega)$ it holds
\begin{equation}\label{eq10}
\int_X(-u_0)^p\omega_{u_1}\wedge\dots\wedge\omega_{u_m}\wedge\omega^{n-m}\leq C\max_{j=1,\dots,m}e_{p,m}(u_j).
\end{equation}
\end{lemma}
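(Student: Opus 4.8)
The plan is to collapse the mixed expression into a single diagonal energy by first replacing the weight $(-u_0)^p$ and then all of the forms $\omega_{u_1},\dots,\omega_{u_m}$ by the data of one common minorant, which is exactly the configuration in which Lemma~\ref{7} applies (the differentiating potential lies \emph{above} the weight). First I would reduce to bounded functions: by Proposition~\ref{prop}~(5) and the remark after Definition~\ref{defEpm} each $u_j$ is a decreasing limit of smooth functions in $\mathcal{E}_m^p(X,\omega)$ along which $e_{p,m}$ converges, while the mixed measures converge weakly by Theorem~\ref{cp}~(2). Hence it suffices to establish the bound for $u_0,\dots,u_m\in\mathcal{E}_m(X,\omega)\cap L^\infty(X)$, $u_j\le 0$, with a constant depending only on $p$ and $m$.

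Next I would introduce the common minorant
\[
\psi:=\Big(\sup\big\{w\in\mathcal{SH}_m(X,\omega): w\le \min_{0\le j\le m}u_j\big\}\Big)^*,
\]
which by Proposition~\ref{prop}~(3)--(4) belongs to $\mathcal{SH}_m(X,\omega)$, is bounded (since $\min_j u_j$ is bounded below), hence $\psi\in\mathcal{E}_m(X,\omega)\cap L^\infty(X)$, and satisfies $\psi\le u_j\le 0$ for every $j$. The quantitative heart of the argument is the energy estimate $e_{p,m}(\psi)\le\sum_{j=0}^m e_{p,m}(u_j)$. This should follow from the description of the Hessian measure of the envelope: $\operatorname{H}_m(\psi)$ carries no mass off the contact set $\bigcup_j\{\psi=u_j\}$ and is dominated there by $\operatorname{H}_m(u_j)$, so that, using $\psi=u_j$ on the $j$-th piece,
\[
e_{p,m}(\psi)=\int_X(-\psi)^p\operatorname{H}_m(\psi)\le\sum_{j=0}^m\int_{\{\psi=u_j\}}(-u_j)^p\operatorname{H}_m(u_j)\le\sum_{j=0}^m e_{p,m}(u_j).
\]

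With $\psi$ available the estimate is now mechanical. Since $\psi\le u_0$ we have $(-u_0)^p\le(-\psi)^p$, so the left-hand side is at most $\int_X(-\psi)^p\omega_{u_1}\wedge\dots\wedge\omega_{u_m}\wedge\omega^{n-m}$. Because $\psi\le u_k\le 0$ for each $k$, I would then apply Lemma~\ref{7} exactly $m$ times: at the $k$-th step take $u=\psi$, $v=u_k$ and
\[
T=\omega_\psi^{\,k-1}\wedge\omega_{u_{k+1}}\wedge\dots\wedge\omega_{u_m}\wedge\omega^{n-m},
\]
which is of the admissible type, and replace $\omega_{u_k}$ by $\omega_\psi$ at the cost of a factor $(p+1)$. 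After the $m$ steps the bound becomes $(p+1)^m\int_X(-\psi)^p\omega_\psi^m\wedge\omega^{n-m}=(p+1)^m e_{p,m}(\psi)$, whence
\[
\int_X(-u_0)^p\omega_{u_1}\wedge\dots\wedge\omega_{u_m}\wedge\omega^{n-m}\le(p+1)^m e_{p,m}(\psi)\le (m+1)(p+1)^m\max_{0\le j\le m}e_{p,m}(u_j),
\]
so $C=(m+1)(p+1)^m$ works (the maximum must include $u_0$, as the case $u_1=\dots=u_m=0$ already shows, and $e_{p,m}(u_0)$ is absorbed into it).

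The step I expect to be the main obstacle is the energy bound for the minorant $\psi$: one must know both that $\operatorname{H}_m(\psi)$ places no mass outside the contact set and that it is dominated by $\operatorname{H}_m(u_j)$ on $\{\psi=u_j\}$. Everything else is a direct application of Lemma~\ref{7} together with the monotonicity $(-u_0)^p\le(-\psi)^p$; the reason a naive iteration of Lemma~\ref{4} alone does not suffice is that it produces diagonal terms with mismatched weights that refer to one another circularly with constants $2^p\ge 1$, and it is precisely the ordering $\psi\le u_j$ furnished by the minorant that breaks this circularity.
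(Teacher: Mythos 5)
Your overall architecture (reduce to bounded functions, then replace everything by a single common minorant and iterate Lemma~\ref{7} to collapse to a diagonal energy) is coherent, and two side observations are genuinely correct: the iterated application of Lemma~\ref{7} with $T=\omega_\psi^{k-1}\wedge\omega_{u_{k+1}}\wedge\dots\wedge\omega_{u_m}\wedge\omega^{n-m}$ is legitimate, and your remark that the maximum in (\ref{eq10}) must include $j=0$ is right --- taking $u_1=\dots=u_m=0$ and $u_0=-c$ gives left-hand side $c^p$ against right-hand side $0$, and indeed the paper's own proof, whose first step (Lemma~\ref{4}) trades the weight $(-u_0)^p$ for integrals involving $e_{p,m}(u_0)$, really proves the bound with $\max_{0\le j\le m}e_{p,m}(u_j)$. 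However, the proposal has a genuine gap exactly where you flagged it: the estimate $e_{p,m}(\psi)\le\sum_{j}e_{p,m}(u_j)$ for the envelope $\psi$ of $\min_j u_j$ rests on the claim that $\operatorname{H}_m(\psi)$ puts no mass outside the contact set $\bigcup_j\{\psi=u_j\}$ and is dominated by $\operatorname{H}_m(u_j)$ on $\{\psi=u_j\}$. This is a substantial theorem about Hessian measures of envelopes, not a routine consequence of anything established or cited in this paper; results of this type are known for the complex Monge--Amp\`ere operator on compact K\"ahler manifolds, but the $(\omega,m)$-Hessian analogue needs its own proof and is arguably deeper than the lemma you are trying to establish. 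As written, the ``quantitative heart'' of your argument is a black box.

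The paper avoids envelopes entirely, and its trick is worth internalizing because it refutes your closing claim that ``a naive iteration of Lemma~\ref{4} alone does not suffice'' due to circularity. After one use of Lemma~\ref{4} to reduce to $u_0=u_1$, the paper sets $u=\epsilon\sum_{j=1}^m u_j$, which is automatically $(\omega,m)$-subharmonic for small $\epsilon$ (no envelope needed, even though $u$ is not a minorant of the $u_j$) and satisfies $\omega_u^m\wedge\omega^{n-m}\ge\epsilon^m\,\omega_{u_1}\wedge\dots\wedge\omega_{u_m}\wedge\omega^{n-m}$. The circular self-reference you feared does appear --- Lemma~\ref{4} bounds $e_{p,m}(u)$ by $\max(\epsilon,\epsilon^p)2^pm\bigl(\max_j e_{p,m}(u_j)+e_{p,m}(u)\bigr)$ --- but it is broken quantitatively, not structurally: choosing $\epsilon$ with $1-2^pm\max(\epsilon,\epsilon^p)>\frac12$ lets the $e_{p,m}(u)$ term be absorbed into the left-hand side, yielding the bound with an explicit constant $C=C(p,m)$. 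So the paper's route uses only the comparison principle (through Lemmas~\ref{4} and~\ref{7}) plus this absorption argument, whereas your route would be complete only after importing (or proving) an envelope concentration-and-domination theorem for $\operatorname{H}_m$; until that ingredient is supplied, the proposal does not constitute a proof.
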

\begin{proof} By using the canonical approximation we can assume without lost of generality that all functions $u_0,\dots u_m$  are bounded. For $T=\omega_{u_1}\wedge\dots\wedge\omega_{u_m}\wedge\omega^{n-m}$, Lemma~\ref{4} yields
\[
\int_X(-u_0)^p\omega_{u_1}\wedge T\leq 2^p\int_X(-u_0)^p\omega_{u_0}\wedge T+2^p\int_X(-u_1)^p\omega_{u_1}\wedge T.
\]
Therefore we can assume that $u_0=u_1$. Set $u=\epsilon \sum_{j=1}^{m}u_j$, where $\epsilon$ is a small positive constant that will be specified later. It is sufficient to estimate  integrals of the type $\int_X(-u_1)^p\omega_{u}^m\wedge\omega^{n-m}$, since
\begin{equation}\label{12}
\omega_{u}^m\wedge \omega^{n-m}\geq \epsilon^m \omega_{u_1}\wedge\dots\wedge\omega_{u_m}\wedge\omega^{n-m}.
\end{equation}
Again by using Lemma~\ref{4}
\[
\int_X(-u_1)^p\omega_{u}^m\wedge\omega^{n-m}\leq 2^pe_{p,m}(u_1)+2^pe_{p,m}(u),
\]
and
\begin{multline}\label{11}
e_{p,m}(u)=\int_{X}\left(-\epsilon \sum_{j=1}^{m}u_j\right)^p\omega_{u}^m\wedge\omega^{n-m}\leq \max(\epsilon^p,\epsilon)\sum_{j=1}^{m}\int_X(-u_j)^p\omega_{u}^m\wedge\omega^{n-m}\\
\leq \max(\epsilon^p,\epsilon)2^pm\left(\max_{j=1,\dots,m}e_{p,m}(u_j)+e_{p,m}(u)\right).
\end{multline}
Now take $\epsilon$ such that $1-2^pm\max(\epsilon,\epsilon^p)>\frac 12$, then by (\ref{12}) and (\ref{11}) we get
\begin{multline*}
\int_X(-u_1)^p\omega_{u_1}\wedge\dots\wedge\omega_{u_m}\wedge\omega^{n-m}\leq \epsilon^{-m}\int_X(-u_1)^p\omega_{u}^m\wedge\omega^{n-m}\\
\leq \frac{4^pm}{\epsilon^m(1-2^pm\max(\epsilon,\epsilon^p))}\max_{j=1,\dots,m}e_{p,m}(u_j)+2^p\epsilon^{-m}e_{p,m}(u_j) \\
\leq 2^{p+1}\epsilon^{-m}\max_{j=1,\dots,m}e_{p,m}(u_j).
\end{multline*}
\end{proof}
\begin{remark} Assume that the functions $u_j\in \mathcal E_m^p(X,\omega)$ are such that $\sup_X u_j=-1$, and $\sup_{j\in \mathbb N}e_{p,m}(u_j)<\infty$. Then
\begin{equation}\label{end of sec3}
u=\sum_{j=1}^{\infty}\frac {1}{2^j}u_j\in \mathcal E_m^p(X,\omega).
\end{equation}
By using Corollary~\ref{cor1} it is sufficient to construct a decreasing sequence of functions $v_j\in \mathcal E_m^p(X,\omega)$, $v_j\searrow u$, $j\to \infty$, such that $\sup_{j\in \mathbb N}e_{p,m}(v_j)<\infty$. Let us next define
\[
v_j=\sum_{k=1}^ja_ku_k, \ \text{ where } \ a_k=\frac {2^j}{2^k(2^j-1)}.
\]
Then $v_j\in \mathcal {SH}_m(X,\omega)$, $v_j\searrow u$, and by Lemma~\ref{lem7} we get
\begin{multline*}
e_{p,m}(v_j)=\int_X\left(-\sum_{k=1}^ja_k u_k\right)^p\omega_{v_j}^m\wedge\omega^{n-m} \\
\leq \sum_{k=1}^j\max \left(a_k,a_k^p\right)\sum_{k_{1}+\dots+k_{1}=m}\binom{m}{k_1\dots k_j}a_1^{k_1}\cdots a_j^{k_j} \int_X(-u_k)^p\omega_{u_{1}}^{k_1}\wedge \dots\wedge \omega_{u_{j}}^{k_j}\wedge \omega^{n-m}\\
\leq \sum_{k=1}^j\max \left(a_k,a_k^p\right)\sum_{k_{1}+\dots+k_{1}=m}\binom{m}{k_1\dots k_j}a_1^{k_1}\cdots a_j^{k_j} C\max_{k=1,\dots,j}e_{p,m}(u_k) \\
\leq C\sup_{k\in \mathbb N}e_{p,m}(u_k),
\end{multline*}
which means that $v_j\in \mathcal E_m^p(X,\omega)$, and $\sup_{j\in \mathbb N}e_{p,m}(v_j)<\infty$. Thus,~(\ref{end of sec3}) holds.
\end{remark}

\section{A Sobolev type inequality}

The aim of this section is to prove the Sobolev type inequality in Theorem~\ref{sobolev}. We shall first need to prove the estimates of the sublevel sets for $(\omega,m)$-subharmonic functions with bounded $(p,m)$-energy in Lemma~\ref{thm_lq}.

\begin{lemma}\label{cap}
If $u\in \mathcal E_m(X,\omega)$, $t\in [0,1]$, $s>0$ then
\begin{equation}\label{1}
t^m\operatorname{cap}_m(\{u<-s-t\})\leq \int_{\{u<-s\}}\operatorname{H}_m(u)\leq s^m\operatorname{cap}_m(\{u<-s\}).
\end{equation}
Furthermore, if $u\in \mathcal E_m^p(X,\omega)$, $p>0$ and $s>1$, then
\begin{equation}\label{2}
\operatorname{cap}_m(\{u<-s\})\leq (s-1)^{-p}e_{p,m}(u).
\end{equation}
\end{lemma}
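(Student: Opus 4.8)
The plan is to reduce everything to bounded $u\le 0$ via the canonical approximation $u_j=\max(u,-j)$, and then read both inequalities in (\ref{1}) off the comparison principle (Theorem~\ref{cp}(4)) and the partial-mass identity (Theorem~\ref{cp}(3)) applied to suitably rescaled competitors; the estimate (\ref{2}) will then fall out of the left half of (\ref{1}) together with Chebyshev's inequality. Throughout I use that a bounded $(\omega,m)$-subharmonic function automatically lies in $\mathcal E_m(X,\omega)$, since its Hessian mass equals $\int_X\omega^n=1$.

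For the left inequality, fix a competitor $w\in\mathcal{SH}_m(X,\omega)$ with $-1\le w\le 0$ and set $v:=tw-s$. By Proposition~\ref{prop}(2) (scaling by $t\in[0,1]$) and invariance of $\omega_v$ under additive constants, $v\in\mathcal E_m(X,\omega)$, and since $-s-t\le v\le -s$ one has $\{u<-s-t\}\subseteq\{u<v\}\subseteq\{u<-s\}$. Because $\omega_v=t\omega_w+(1-t)\omega\ge t\omega_w$, it follows that $\operatorname{H}_m(v)\ge t^m\operatorname{H}_m(w)$. Feeding this into the comparison principle $\int_{\{u<v\}}\operatorname{H}_m(v)\le\int_{\{u<v\}}\operatorname{H}_m(u)$ and using the two inclusions yields $t^m\int_{\{u<-s-t\}}\operatorname{H}_m(w)\le\int_{\{u<-s\}}\operatorname{H}_m(u)$; taking the supremum over $w$ gives the left inequality.

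For the right inequality I take the competitor $w:=\max(u/s,-1)$. The step I expect to be the main obstacle is that $u/s$ is $(\omega,m)$-subharmonic only for $s\ge 1$—the background form $\omega$ obstructs scaling by a factor larger than $1$—so the argument below applies directly when $s\ge 1$, and the range $0<s<1$ must be addressed separately. For $s\ge 1$ both $u/s$ and $w$ lie in $\mathcal E_m(X,\omega)$, and comparing total masses via Theorem~\ref{cp}(3) on the set $\{u/s>-1\}$ yields the identity $\int_{\{u\le -s\}}\operatorname{H}_m(w)=\int_{\{u\le -s\}}\operatorname{H}_m(u/s)$. Since $\omega_{u/s}\ge s^{-1}\omega_u$ forces $\operatorname{H}_m(u/s)\ge s^{-m}\operatorname{H}_m(u)$, the right side is at least $s^{-m}\int_{\{u<-s\}}\operatorname{H}_m(u)$, whereas the left side is at most $\operatorname{cap}_m(\{u\le -s\})$ because $w$ is admissible. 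Finally, running this with $s$ replaced by $s'>s$ and letting $s'\downarrow s$ (monotone convergence on the left, and $\{u\le -s'\}\subseteq\{u<-s\}$ together with monotonicity of $\operatorname{cap}_m$ on the right) converts the closed sublevel set into the open one and produces $\int_{\{u<-s\}}\operatorname{H}_m(u)\le s^m\operatorname{cap}_m(\{u<-s\})$.

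Finally, (\ref{2}) follows by applying the left inequality of (\ref{1}) with $t=1$ and $s$ replaced by $s-1$ (legitimate since $s>1$), which gives $\operatorname{cap}_m(\{u<-s\})\le\int_{\{u<-(s-1)\}}\operatorname{H}_m(u)$, and then estimating the remaining integral by Chebyshev: on $\{u<-(s-1)\}$ we have $(-u)^p>(s-1)^p$, hence $\int_{\{u<-(s-1)\}}\operatorname{H}_m(u)\le (s-1)^{-p}\int_X(-u)^p\operatorname{H}_m(u)=(s-1)^{-p}e_{p,m}(u)$, which is exactly the claimed bound.
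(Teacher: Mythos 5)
Your proof is correct and follows essentially the same route as the paper's: the left inequality in (\ref{1}) via the competitor $tw-s$ (the paper writes $-s+tv$), the pointwise bound $\omega_{tw}\geq t\omega_w$, and the comparison principle of Theorem~\ref{cp}~(4); the right inequality via the competitor $\max(u/s',-1)=\frac{1}{s'}\max(u,-s')$, which is literally the paper's test function, with the same limit $s'\searrow s$ (the paper's $s_0\searrow s$); and (\ref{2}) by the same application of the left inequality with $t=1$ followed by Chebyshev.

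The obstacle you flagged --- that $u/s$ is $(\omega,m)$-subharmonic only for $s\geq 1$, so your argument proves the right inequality of (\ref{1}) only in that range --- is not a gap in your proposal but a (harmless) defect of the statement itself. The paper's own proof quietly assumes $1\leq s<s_0$, and in fact the right inequality is false for $0<s<1$: take $u\equiv-1\in\mathcal E_m(X,\omega)$, so that for $s<1$ one has $\{u<-s\}=X$, hence $\int_{\{u<-s\}}\operatorname{H}_m(u)=\int_X\omega^n=1$, while $s^m\operatorname{cap}_m(\{u<-s\})=s^m<1$, since every admissible test function has total Hessian mass $1$. So there is nothing to ``address separately''; all subsequent uses (inequality (\ref{2}), Lemma~\ref{lq}, Theorem~\ref{thm_lq}) invoke only $s\geq 1$. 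One small point of care in your mass-balance step: applying Theorem~\ref{cp}~(3) to the pair $(-1,u/s)$ and using $\int_X\operatorname{H}_m(u/s)=1$ presupposes $u/s\in\mathcal E_m(X,\omega)$; this does follow from Theorem~\ref{cp}~(1), since $u\leq u/s$ under the sign normalization $u\leq 0$ (or after subtracting $\sup_Xu$), but it can be avoided altogether by working, as the paper does, only with the bounded functions $\max(u,-s')$, whose membership in $\mathcal E_m(X,\omega)$ is automatic.
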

\begin{proof}
Let $v\in \mathcal E_m(X,\omega)$ be such that $-1\leq v\leq 0$. Then for $t\in [0,1]$ we get that $tv\in \mathcal E_m(X,\omega)$. Note that we have
\[
\{u<-s-t\}\subset \{u<-s+tv\}\subset \{u<-s\},
\]
and therefore by Theorem~\ref{cp} (4)
\begin{multline*}
\int_{\{u<-s-t\}}\operatorname{H}_m(v)\leq \int_{\{u<-s+tv\}}\operatorname{H}_m(v)\leq t^{-m}\int_{\{u<-s-t\}}\operatorname{H}_m(s+tv)\\
\leq t^{-m}\int_{\{u<-s+tv\}}\operatorname{H}_m(u)\leq t^{-m}\int_{\{u<-s\}}\operatorname{H}_m(u).
\end{multline*}
This proves the left inequality in (\ref{1}).

To prove the right inequality in (\ref{1}) we assume for a moment that $u$ is continuous and let $1\leq s<s_0$. Then
\begin{multline*}
\int_{\{u<-s\}}\operatorname{H}_m(\max (u,-s_0))=-\int_{\{u\geq -s\}}\operatorname{H}_m(\max (u,-s_0))+\int_{X}\operatorname{H}_m(\max (u,-s_0))\\
=-\int_{\{u\geq-s\}}\operatorname{H}_m(u)+\int_{X}\operatorname{H}_m(u)=\int_{\{u<-s\}}\operatorname{H}_m(u).
\end{multline*}
Note that $\frac {1}{s_0}\max (u,-s_0)\in \mathcal E_m(X,\omega)$, and $-1\leq \frac {1}{s_0}\max (u,-s_0)\leq 0$, and therefore
\begin{multline*}
\operatorname{cap}_m(\{u<-s\})\geq \int_{\{u<-s\}}\operatorname{H}_m\left(\frac {1}{s_0}\max (u,-s_0)\right) \\
\geq s_0^{-m}\int_{\{u<-s\}}\operatorname{H}_m(\max (u,-s_0))=s_0^{-m}\int_{\{u<-s\}}\operatorname{H}_m(u).
\end{multline*}
If $s_0\searrow s$, then we get
\[
\operatorname{cap}_m(\{u<-s\})\geq s^{-m}\int_{\{u<-s\}}\operatorname{H}_m(u).
\]
For the general situation take a smooth decreasing sequence $u_j\searrow u$ and observe that
\begin{multline*}
\int_{\{u<-s\}}\operatorname{H}_m(u)\leq \liminf_{j\to \infty}\int_{\{u_j<-s\}}\operatorname{H}_m(u_j)\\
\leq s^m \liminf_{j\to \infty} \operatorname{cap}_m(\{u_j<-s\})\leq s^m \operatorname{cap}_m(\{u<-s\}).
\end{multline*}

To prove inequality (\ref{2}) assume that $u\in \mathcal E_m^p(X,\omega)$, and $s>1$. We shall use (\ref{1}) to obtain
\[
t^m\operatorname{cap}_m(\{u<-s-t\})\leq \int_{\{u<-s\}}\operatorname{H}_m(u)\leq s^{-p}\int_{\{u<-s\}}(-u)^p\operatorname{H}_m(u)\leq s^{-p}e_{p,m}(u).
\]
For $t=1$ we get the desired conclusion.
\end{proof}

\begin{lemma}\label{lq}
If $u\in \mathcal E_m^p(X,\omega)$, then $u\in L^q(X)$ for $0<q<\frac {\max (p,1)n}{n-m}$.
\end{lemma}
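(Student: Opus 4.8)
The plan is to control the volume of the sublevel sets $\{u<-t\}$ and then integrate via the layer-cake formula. Since no auxiliary measure appears here, $L^q(X)$ is understood with respect to $V$, where $V(A)=\int_A\omega^n$. Writing
\[
\int_X(-u)^q\,\omega^n=q\int_0^{\infty}t^{q-1}V(\{u<-t\})\,dt,
\]
the whole problem reduces to obtaining a sufficiently fast decay rate for $V(\{u<-t\})$ as $t\to\infty$, fast enough that the integral converges for every $q<\frac{\max(p,1)n}{n-m}$.

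First I would establish a capacity decay estimate of the form $\operatorname{cap}_m(\{u<-t\})\leq C_1\,t^{-\max(p,1)}$ for all $t\geq 2$, with $C_1$ depending on $u$ only through $e_{p,m}(u)$. When $p\geq 1$ this is immediate from inequality~(\ref{2}) of Lemma~\ref{cap}, which gives $\operatorname{cap}_m(\{u<-t\})\leq(t-1)^{-p}e_{p,m}(u)$, together with $(t-1)^{-p}\leq 2^p t^{-p}$ for $t\geq 2$. When $0<p<1$ the exponent $-p$ is too weak, so here I would instead invoke Proposition~\ref{prop1}, which yields the stronger rate $\operatorname{cap}_m(\{u<-t\})\leq C/t$, matching $t^{-\max(p,1)}=t^{-1}$. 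Thus in both regimes one has the unified bound with exponent $\max(p,1)$.

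Next I would feed this into the Dinew--Ko\l odziej comparison of Lemma~\ref{dk}. Fix any $q<\frac{\max(p,1)n}{n-m}$. Since $\max(p,1)\cdot\frac{n}{n-m}>q$ and the map $\alpha\mapsto\max(p,1)\alpha$ is continuous, I can choose $\alpha$ with $1<\alpha<\frac{n}{n-m}$ and still $\max(p,1)\alpha>q$. Applying Lemma~\ref{dk} to $A=\{u<-t\}$ then gives, for $t\geq 2$,
\[
V(\{u<-t\})\leq C(\alpha)\operatorname{cap}_m(\{u<-t\})^{\alpha}\leq C(\alpha)C_1^{\alpha}\,t^{-\max(p,1)\alpha}.
\]
Splitting the layer-cake integral at $t=2$, the part over $[0,2]$ is at most $\int_0^2 t^{q-1}\,dt\cdot V(X)<\infty$ because $V(X)=\int_X\omega^n=1$, while the tail over $[2,\infty)$ is controlled by a constant times $\int_2^{\infty}t^{q-1-\max(p,1)\alpha}\,dt$, which converges precisely because $q-\max(p,1)\alpha<0$. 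This shows $\int_X(-u)^q\,\omega^n<\infty$, that is, $u\in L^q(X)$.

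The only genuine obstacle is the case $0<p<1$: the energy estimate~(\ref{2}) alone would yield merely $q<\frac{pn}{n-m}$, short of the claimed range. The point I would emphasize is that the bare membership $u\in\mathcal E_m(X,\omega)$ already forces the $1/t$ capacity decay of Proposition~\ref{prop1}, independently of $p$, and this is exactly what upgrades the admissible exponent from $p$ to $\max(p,1)$. Everything else is the routine choice of $\alpha$ strictly below the Dinew--Ko\l odziej threshold $\frac{n}{n-m}$, made possible by the strict inequality defining the target range of $q$.
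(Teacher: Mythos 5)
Your proof is correct and takes essentially the same route as the paper's: the layer-cake formula, the capacity decay from inequality~(\ref{2}) of Lemma~\ref{cap} when $p\geq 1$ and from Proposition~\ref{prop1} when $0<p<1$, upgraded to a volume estimate via the Dinew--Ko\l odziej bound of Lemma~\ref{dk} with $\alpha$ chosen just below $\frac{n}{n-m}$. The only cosmetic differences are your unified exponent $\max(p,1)$ and the explicit split at $t=2$, where the paper instead treats the two cases $p\geq 1$ and $p<1$ separately with the bound $(t-1)^{-p\alpha}$ kept intact.
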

\begin{proof}
Let $u\in \mathcal E_m^p(X,\omega)$. Assume first that $p\geq 1$. From Lemma~\ref{dk} and Lemma~\ref{cap} we have for $\alpha<\frac {n}{n-m}$
\begin{multline*}
\int_{X}(-u)^q\omega^n\leq 2^q+q\int_{2}^{\infty}t^{q-1}V(\{u<-t\})\,dt\\ \leq 2^q+qC(\alpha)\int_2^{\infty}t^{q-1}\left((t-1)^{-p}e_{p,m}(u)\right)^{\alpha}\,dt\\
=2^q+qC(\alpha)e_{p,m}(u)^{\alpha}\int_{2}^{\infty}t^{q-1}(t-1)^{-p\alpha}\,dt.
\end{multline*}
The right hand side is finite if, and only if, $q<p\alpha<\frac {pn}{n-m}$.

For $p<1$ we use Proposition~\ref{prop1} to obtain, in a similar way as above, that
\[
\int_{X}(-u)^q\omega^n\leq 2^q+q\int_{2}^{\infty}t^{q-1}(t-1)^{-\alpha}\,dt.
\]
The right hand side is finite if, and only if, $q<\alpha<\frac {n}{n-m}$.
\end{proof}

We shall need the following elementary fact.

\begin{proposition}\label{elementary}
Let $\alpha>0$ and $F:[0,\infty)\to [0,\infty)$  a decreasing function such that
\[
\int_{0}^{\infty}t^{\alpha}F(t)\,dt<\infty.
\]
Then there exists a constant $C>0$ such that for all $t>0$ we have $F(t)\leq Ct^{-\alpha-1}$.
\end{proposition}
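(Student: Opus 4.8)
The plan is to exploit the monotonicity of $F$ to convert the finiteness of the weighted integral into a pointwise decay estimate. Fix $t > 0$. The key observation is that on the interval $[t/2, t]$ monotonicity gives $F(s) \ge F(t)$ for every $s$, so the contribution of this single interval to the integral already controls $F(t)$ from below. The dyadic window $[t/2,t]$ (rather than $[0,t]$) is chosen precisely so that the weight $s^\alpha$ integrates to something of order $t^{\alpha+1}$, producing the correct exponent.

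Concretely, write $M := \int_0^\infty s^\alpha F(s)\, ds < \infty$. First I would restrict the integral to $[t/2,t]$ and use $F(s)\ge F(t)$ there to obtain
\[
M \ge \int_{t/2}^{t} s^\alpha F(s)\, ds \ge F(t)\int_{t/2}^{t} s^\alpha\, ds .
\]
Next I would evaluate the elementary integral
\[
\int_{t/2}^{t} s^\alpha\, ds = \frac{t^{\alpha+1} - (t/2)^{\alpha+1}}{\alpha+1} = \frac{1 - 2^{-(\alpha+1)}}{\alpha+1}\, t^{\alpha+1}.
\]
Combining the two displays and solving for $F(t)$ yields
\[
F(t) \le \frac{(\alpha+1)\,M}{1 - 2^{-(\alpha+1)}}\; t^{-(\alpha+1)},
\]
so the assertion holds with $C := (\alpha+1)M / (1 - 2^{-(\alpha+1)})$, which is manifestly independent of $t$.

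Since this constant does not depend on $t$, the resulting bound is uniform over all $t>0$, which is exactly what is claimed. There is no genuine obstacle here: the only points worth a word of care are that $F(t)$ is finite for each fixed $t$ (immediate, as $F$ takes values in $[0,\infty)$) and that the chosen window has a fixed length ratio so that $\int_{t/2}^t s^\alpha\, ds$ is of the correct order $t^{\alpha+1}$. Any window $[ct,t]$ with $0<c<1$ would serve equally well and merely alter the value of $C$.
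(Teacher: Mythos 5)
Your proof is correct, but it takes a genuinely different route from the paper. The paper splits the integral at $s$ and integrates by parts, writing $\int_0^s t^{\alpha}F(t)\,dt = \frac{s^{\alpha+1}F(s)}{\alpha+1}-\frac{1}{\alpha+1}\int_0^s t^{\alpha+1}F'(t)\,dt$ and discarding the two nonnegative remainder terms (the one involving $-F'\geq 0$ and the tail $\int_s^{\infty}t^{\alpha}F(t)\,dt$) to get $F(s)\leq (\alpha+1)\,C\,s^{-\alpha-1}$. Your window-comparison argument reaches the same conclusion without ever differentiating $F$, which is actually a point in its favor: a decreasing function need not be absolutely continuous, so the paper's integration by parts, read literally, requires either a smoothness assumption or a reformulation via the Stieltjes measure $dF\leq 0$, whereas your argument applies to any decreasing $F$ as stated. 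One small simplification you missed: the dyadic window $[t/2,t]$ is unnecessary. Since $F(s)\geq F(t)$ on all of $[0,t]$ and $\int_0^t s^{\alpha}\,ds = t^{\alpha+1}/(\alpha+1)$ already has the correct order (this needs only $\alpha>-1$), using the full interval $[0,t]$ gives
\[
M \geq \int_0^t s^{\alpha}F(s)\,ds \geq F(t)\,\frac{t^{\alpha+1}}{\alpha+1},
\]
i.e.\ $F(t)\leq (\alpha+1)M\,t^{-\alpha-1}$ --- the same constant the paper obtains, and slightly better than your $(\alpha+1)M/(1-2^{-(\alpha+1)})$. Your stated rationale for the dyadic window (that it is needed to get the exponent right) is therefore misplaced, but this is cosmetic and does not affect the validity of the argument.
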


\begin{proof}
Using integration by parts we get
\begin{multline*}
C=\int_0^{\infty}t^{\alpha}F(t)\,dt=\int_0^st^{\alpha}F(t)\,dt+\int_s^{\infty}t^{\alpha}F(t)\,dt\\
=\frac {s^{\alpha+1}F(s)}{\alpha+1}-\frac {1}{\alpha+1}\int_0^st^{\alpha+1}F'(t)\,dt+\int_s^{\infty}t^{\alpha}F(t)\,dt\geq \frac {s^{\alpha+1}F(s)}{\alpha+1}.
\end{multline*}
\end{proof}
\begin{remark}
From Lemma~\ref{lq} and Proposition~\ref{elementary} it follows that for all $u\in \mathcal E_m^p(X,\omega)$ there exists a constant $C(u,q)$ depending only on $u$ and $q$ such that
\begin{equation}\label{3}
V(\{u<-t\})\leq \frac {C(u,q)}{t^q}, \ \text{ for } \ 0<q<\frac {\max(p,1)n}{n-m}.
\end{equation}
\end{remark}

In Theorem~\ref{thm_lq} we prove estimates of the sublevel sets of $(\omega,m)$-subharmonic functions with bounded $(p,m)$-energy. For the
case $p=1$, Theorem~\ref{thm_lq} gives sharper estimates than those proved in~\cite{LN}.

\begin{theorem}\label{thm_lq} Let $n\geq 2$, and let $1\leq m\leq n$. Assume that $(X,\omega)$ is a connected and compact K\"ahler manifold of complex dimension $n$, where $\omega$ is a K\"ahler form on $X$ such that $\int_{X}\omega^n=1$. If $u\in \mathcal E_m^p(X,\omega)$, then
\begin{enumerate}\itemsep2mm
\item there exists a constant $C(u)$ depending only on $u$ such that for all $t>1$
\[
\operatorname{cap}_m(\{u<-t\})\leq \frac {C(u)}{t^{p+1}};
\]
\item there exists a constant $C(u,q)$ depending only on $u$ and $q$ such that for all $t>1$, and $0<q<\frac {(p+1)n}{n-m}$,
\[
V(\{u<-t\})\leq \frac {C(u,q)}{t^{q}};
\]
\item for all  $0<q<\frac {(p+1)n}{n-m}$,   we have that $u\in L^q(X)$.
\end{enumerate}
\end{theorem}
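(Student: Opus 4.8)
The plan is to prove the three assertions in the order $(1)\Rightarrow(2)\Rightarrow(3)$, with the capacity estimate $(1)$ carrying essentially all of the difficulty and the other two following by short deductions.

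For $(1)$ I would first record the ``easy half'' of the decay. Fix a competitor $v\in\mathcal{SH}_m(X,\omega)$ with $-1\leq v\leq0$. On the set $\{u<-t\}$ one has $(-u)^p>t^p$, so Chebyshev's inequality gives
\[
\int_{\{u<-t\}}\operatorname{H}_m(v)\leq t^{-p}\int_X(-u)^p\operatorname{H}_m(v).
\]
The key is that the mixed energy on the right is bounded uniformly in $v$: applying Lemma~\ref{lem7} with $u_0=u$ and $u_1=\dots=u_m=v$, and using $e_{p,m}(v)\leq1$ (valid since $-1\leq v\leq0$), one obtains $\int_X(-u)^p\operatorname{H}_m(v)\leq C\max(e_{p,m}(u),1)=:C(u)$. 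Taking the supremum over all admissible $v$ yields $\operatorname{cap}_m(\{u<-t\})\leq C(u)t^{-p}$; the same bound also falls out of Lemma~\ref{cap}. This is the natural first estimate, but it is one power short of the target.

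The heart of the matter is upgrading the exponent from $p$ to $p+1$, and this is where I expect the real obstacle. The idea is to feed the sublevel information of Lemma~\ref{cap} — which ties $\operatorname{cap}_m(\{u<-s\})$ to the Hessian tail $\int_{\{u<-s\}}\operatorname{H}_m(u)$ and hence, through $e_{p,m}(u)<\infty$, to a genuine integrability statement — into the gain-of-one-power mechanism of Proposition~\ref{elementary}, applied to the decreasing function $t\mapsto\operatorname{cap}_m(\{u<-t\})$. The delicate point is that the straightforward route, via $\operatorname{cap}_m(\{u<-t-1\})\leq\int_{\{u<-t\}}\operatorname{H}_m(u)$, only shows $\int_0^\infty t^{p-1}\operatorname{cap}_m(\{u<-t\})\,dt<\infty$, which Proposition~\ref{elementary} converts back into the $t^{-p}$ bound; reaching $t^{-(p+1)}$ at the sharp endpoint forces the capacity to decay strictly faster than the Hessian tail itself (the phenomenon already visible in the $p=0$ case of Proposition~\ref{prop1}). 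I would try to realise this either by a Cegrell-type weighted comparison carried out on the truncations $\max(u,-s)$, keeping track of the weight $(-u)^p$ under the comparison principle, or by iterating the two inequalities of Lemma~\ref{cap}(1) against the finite $p$-energy so as to squeeze out the extra power.

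Once $(1)$ is available, $(2)$ is immediate from the Dinew--Ko\l odziej estimate: for $1<\alpha<\frac{n}{n-m}$, Lemma~\ref{dk} gives
\[
V(\{u<-t\})\leq C(\alpha)\operatorname{cap}_m(\{u<-t\})^{\alpha}\leq C(\alpha)\,C(u)^{\alpha}\,t^{-(p+1)\alpha},
\]
and letting $\alpha\uparrow\frac{n}{n-m}$ covers every exponent $q<\frac{(p+1)n}{n-m}$. Finally $(3)$ follows from $(2)$ by the layer-cake formula: choosing $q<q'<\frac{(p+1)n}{n-m}$ and using $V(X)=1$,
\[
\int_X(-u)^q\,\omega^n=q\int_0^\infty t^{q-1}V(\{u<-t\})\,dt\leq 1+q\,C\!\int_1^\infty t^{q-1-q'}\,dt<\infty,
\]
the last integral converging precisely because $q<q'$. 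Hence $u\in L^q(X)$ for all $q<\frac{(p+1)n}{n-m}$, as claimed.
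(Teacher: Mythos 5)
Your deductions of (2) from (1) via Lemma~\ref{dk} and of (3) from (2) via the layer-cake formula are correct and coincide with how the paper exploits the capacity bound, and your preliminary estimate $\operatorname{cap}_m(\{u<-t\})\leq C(u)t^{-p}$ is fine (it is inequality (\ref{2}) of Lemma~\ref{cap}). But the central claim of the theorem --- the upgrade of the capacity decay from $t^{-p}$ to $t^{-(p+1)}$ --- is never actually proved: you correctly identify it as the obstacle and then only name two candidate strategies without carrying either out. That is a genuine gap, and moreover your second suggestion (iterating the two inequalities of Lemma~\ref{cap}) cannot close it on its own: the left inequality in (\ref{1}) bounds the capacity by the Hessian tail $\int_{\{u<-s\}}\operatorname{H}_m(u)$, finite $p$-energy gives only $s^{-p}$ decay for that tail, and feeding the capacity back through the right inequality of (\ref{1}) \emph{loses} a factor $s^m$; no loop between these two inequalities alone produces the extra power.

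The paper's mechanism is different from both of your suggestions. Fix a competitor $v\in\mathcal E_m(X,\omega)$ with $-1\leq v\leq 0$ and use the \emph{rescaled} function $\frac ut\in\mathcal E_m(X,\omega)$ together with the inclusions $\{u<-2t\}\subset\{\frac ut<v-1\}\subset\{u<-t\}$ and the comparison principle (Theorem~\ref{cp}(4)). Writing $\omega_{u/t}=t^{-1}\omega_u+(1-t^{-1})\omega$ and expanding $\omega_{u/t}^m\wedge\omega^{n-m}$ binomially, every term containing at least one factor $\omega_u$ carries a coefficient at most $t^{-1}$, while its mass on $\{u<-t\}$ is at most $2^pt^{-p}e_{p,m}(u)$ by Chebyshev and Lemma~\ref{4}; it is this scaling factor $t^{-1}$, not a weighted comparison on truncations, that supplies the extra power. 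The leftover $j=0$ term is the pure volume $V(\{u<-t\})$, which initially decays only like $t^{-q}$ with $q<\frac{\max(p,1)n}{n-m}$ (Lemma~\ref{lq} and Proposition~\ref{elementary}), so the first pass yields only
\[
\operatorname{cap}_m(\{u<-t\})\leq \frac{C_1(u,q)}{t^{\min(p+1,q)}}.
\]
The paper then bootstraps: capacity decay at rate $p_1=\min(p+1,q)$ improves the volume decay to any rate below $p_1\frac{n}{n-m}$ via Lemma~\ref{dk}, which improves the capacity rate to $\min\bigl(p+1,p_1\frac{n}{n-m}\bigr)$, and since $\frac{n}{n-m}>1$ finitely many iterations reach the exponent $p+1$. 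In particular (1), (2) and (3) are established simultaneously and are genuinely interlocked --- the proof of (1) needs the volume estimates of (2) at intermediate rates --- which is precisely why your linear plan of proving (1) first, in isolation, ran into the wall you noticed.
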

\begin{proof}
By Lemma~\ref{lq} we know that if $u\in \mathcal E_m^p(X,\omega)$, then $u\in L^q(X)$ for $0<q<\frac {\max(p,1)n}{n-m}$. Fix $u\in \mathcal E_m^p(X,\omega)$, $u\leq -1$, and $v\in \mathcal E_m(X,\omega)$, $-1\leq v\leq 0$, and let $t\geq 1$. Then $\frac {u}{t}\in \mathcal E_m(X,\omega)$, and
\[
\{u<-2t\}\subset \left\{\frac ut<v-1\right\}\subset \{u<-t\}.
\]
By Theorem~\ref{cp} (4) we obtain
\begin{multline*}
\int_{\{u<-2t\}}\omega_v^m\wedge\omega^{n-m}\leq \int_{\left\{\frac ut<v-1\right\}}\omega_v^m\wedge\omega^{n-m}\leq \int_{\left\{\frac ut<v-1\right\}}\omega_{\frac ut}^m\wedge\omega^{n-m}\\
\leq \int_{\{u<-t\}}\omega_{\frac ut}^m\wedge\omega^{n-m}\leq\int_{\{u<-t\}}\omega^n+t^{-1}\sum_{j=1}^n\binom{m}{j}\int_{\{u<-t\}}\omega_u^j\wedge\omega^{n-j}.
\end{multline*}
Now observe that by Lemma~\ref{4}
\[
\int_{\{u<-t\}}\omega_u^j\wedge\omega^{n-j}\leq t^{-p}\int_X(-u)^p\omega_u^{j}\wedge\omega^{n-j}\leq\frac {2^p}{t^p}e_{p,m}(u),
\]
and by (\ref{3})
\[
\int_{\{u<-t\}}\omega^n=V(\{u<-t\})\leq \frac {C(u,q)}{t^q}, \ \text { for } \ 0<q<\frac {\max(p,1)n}{n-m}.
\]
Therefore we get
\begin{equation}\label{5}
\operatorname{cap}_m(\{u<-t\})\leq \frac {C_1(u,q)}{t^{\min (p+1,q)}}, \ \text { for } q<\frac {\max (p,1)n}{n-m}\ .
\end{equation}
Let $p_1=\min (p+1,q)$, where $q<\frac {\max (p,1)n}{n-m}$. Then by (\ref{5}) we have
\[
\operatorname{cap}_m(\{u<-t\})\leq \frac {C_2(u,q)}{t^{p_1}}.
\]
By the proof of Lemma~\ref{lq} we get that $u\in L^{q_1}$, where $q_1=p_1\frac {n}{n-m}$, and then it follows from (\ref{3}) that
\[
V(\{u<-t\})\leq \frac {C_3(u,q_1)}{t^{q_1}}.
\]
Now we can once more repeat the argument above and obtain
\[
\operatorname{cap}_m(\{u<-t\})\leq \frac {C_4(u,q_1)}{t^{p_2}}e_{p,m}(u),
\]
where $p_2=\min (p+1,q_1)$. It would again imply that $u\in L^{q_2}(X)$ and
\[
V(\{u<-t\})\leq \frac {C_5(u,q_2)}{t^{q_2}},
\]
where $q_2<p_2\frac {n}{n-m}$. We can repeat this argument $l$-times until $p_l=p+1$ (which is possible since $\frac {n}{n-m}>1$). Finally, we get
\[
\operatorname{cap}_m(\{u<-t\})\leq \frac {\tilde C(u)}{t^{p+1}},
\]
$u\in L^{q}(X)$, and
\[
V(\{u<-t\})\leq \frac {\tilde C(u,q)}{t^{q}}, \ \text { for} \ 0<q<\frac {(p+1)n}{n-m}.
\]
\end{proof}

Now we can prove a Sobolev type inequality for $(\omega,m)$-subharmonic functions with bounded $(p,m)$-energy.

\begin{theorem}\label{sobolev} Let $X$ be a connected and compact K\"ahler manifold of complex dimension $n$, where $\omega$ is a K\"ahler form on $X$ such that $\int_{X}\omega^n=1$. Also let $1\leq m\leq n$ and $p\geq 1$. Then for any $1<q<\frac {pn}{n-m}$, and any $\epsilon >0$, there exists constant $C(\epsilon)$ such that for any $u\in \mathcal E_m^p(X,\omega)$, $\sup_X u=-1$, we have that
\[
\int_X(-u)^q\omega^n\leq C(\epsilon)e_{p,m}(u)^{\frac {n(q-1)}{np-n+m}+\epsilon}\, .
\]
\end{theorem}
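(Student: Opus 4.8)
The plan is to split the integral at a threshold $T>1$, writing
\[
\int_X(-u)^q\omega^n=\int_{\{u\ge -T\}}(-u)^q\omega^n+\int_{\{u<-T\}}(-u)^q\omega^n,
\]
to estimate the ``shallow'' piece by a low-order energy and the ``deep'' piece by a capacity decay, and to balance the two by optimizing $T$ at the end. First I would treat the shallow piece. On $\{u\ge -T\}$ one has $1\le -u\le T$, so $(-u)^q\le T^{q-1}(-u)$ and therefore
\[
\int_{\{u\ge -T\}}(-u)^q\omega^n\le T^{q-1}\int_X(-u)\,\omega^n .
\]
To control $\int_X(-u)\omega^n$ by the energy I would run the integration by parts underlying Lemma~\ref{7} with exponent $1$: replacing one factor $\omega$ at a time by $\omega_u-dd^cu$ and discarding the nonnegative gradient terms $\int_X du\wedge d^cu\wedge\omega_u^k\wedge\omega^{n-k-1}$ gives $\int_X(-u)\omega^n\le C\int_X(-u)\,\omega_u^m\wedge\omega^{n-m}=C\,e_{1,m}(u)$. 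Since $\operatorname{H}_m(u)$ is a probability measure and $p\ge 1$, Jensen's inequality yields $e_{1,m}(u)=\int_X(-u)\operatorname{H}_m(u)\le e_{p,m}(u)^{1/p}$, so the shallow piece is bounded by $C\,T^{q-1}e_{p,m}(u)^{1/p}$. This step is what produces the factor $q-1$ in the target exponent, and it is exactly where the hypothesis $p\ge 1$ is used.

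Next I would treat the deep piece $\int_{\{u<-T\}}(-u)^q\omega^n$ through its distribution function, feeding in the Dinew--Ko\l odziej volume--capacity inequality of Lemma~\ref{dk}, namely $V(\{u<-t\})\le C(\alpha)\operatorname{cap}_m(\{u<-t\})^{\alpha}$ for $\alpha<\tfrac{n}{n-m}$. Into this I would insert the sublevel capacity bounds of Lemma~\ref{cap} together with the refined rate-$(p+1)$ estimate $\operatorname{cap}_m(\{u<-t\})\le C\,t^{-(p+1)}$ from Theorem~\ref{thm_lq}(1). This yields a bound of the form
\[
\int_{\{u<-T\}}(-u)^q\omega^n\le C\,e_{p,m}(u)^{\sigma}\,T^{\,q-r},
\]
with a decay rate $r$ (a multiple of $p+1$) chosen large enough that the defining integral converges, i.e.\ $r>q$; this is precisely where the restriction $q<\tfrac{pn}{n-m}$ and the freedom to push $\alpha$ toward $\tfrac{n}{n-m}$ are needed.

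Finally I would add the two estimates and minimize over $T$: the shallow term grows like $T^{q-1}$ while the deep term decays like $T^{q-r}$, so the optimal $T$ is a definite power of $e_{p,m}(u)$, and substituting it back produces a power $e_{p,m}(u)^{\Xi}$ of the energy.

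The hard part will be forcing the resulting exponent $\Xi$ all the way down to the sharp value $\tfrac{n(q-1)}{np-n+m}$. The difficulty is that a naive use of the energy--capacity estimate (the rate-$p$ bound of Lemma~\ref{cap}) only delivers the weaker exponent $q/p$, which is strictly larger than $\tfrac{n(q-1)}{np-n+m}$ throughout the admissible range $1<q<\tfrac{pn}{n-m}$; squeezing out the gain therefore forces one to exploit the refined rate-$(p+1)$ decay of Theorem~\ref{thm_lq} and to track carefully how all constants depend on $e_{p,m}(u)$ as they pass through the Dinew--Ko\l odziej exponent. The arbitrarily small loss $\epsilon$ in the statement is then what absorbs the strict inequality $\alpha<\tfrac{n}{n-m}$ (and any residual slack coming from the iteration in the proof of Theorem~\ref{thm_lq}), since the sharp exponent is only approached, and not attained, as $\alpha\uparrow\tfrac{n}{n-m}$.
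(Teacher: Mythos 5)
Your skeleton (split at a threshold $T$, bound the shallow and deep pieces separately, optimize over $T$) is in principle equivalent to what the paper does — the paper's proof is a one-line H\"older interpolation of moments of the distribution function, $\int_0^\infty t^{q-1}V\,dt\leq\left(\int_0^\infty t^{Q-1}V\,dt\right)^{\frac{q-1}{Q-1}}\left(\int_0^\infty V\,dt\right)^{\frac{Q-q}{Q-1}}$ with $q<Q<p\alpha$, which is the same mechanism — but both of your key ingredients are wrong, and as written the argument cannot reach the claimed exponent. The fatal step is the shallow piece: by bounding $\int_X(-u)\,\omega^n\leq Ce_{1,m}(u)\leq Ce_{p,m}(u)^{1/p}$ via Jensen, you inject an energy factor that destroys the whole gain. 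Indeed, with the only genuinely quantified deep bound available — Lemma~\ref{cap}(2) combined with Lemma~\ref{dk}, giving $V(\{u<-t\})\leq Ce_{p,m}(u)^{\alpha}t^{-p\alpha}$ — balancing $T^{q-1}e_{p,m}(u)^{1/p}$ against $e_{p,m}(u)^{\alpha}T^{q-p\alpha}$ forces $T=e_{p,m}(u)^{1/p}$ for \emph{every} $\alpha$, and the total is $\approx e_{p,m}(u)^{q/p}$: exactly the naive exponent you yourself identify as too weak (one checks $q/p>\frac{n(q-1)}{np-n+m}$ strictly throughout $1<q<\frac{pn}{n-m}$). The crucial ingredient in the paper is the \emph{uniform} bound $\int_X(-u)\,\omega^n\leq C'$ for all $(\omega,m)$-subharmonic $u$ with $\sup_Xu=-1$, quoted from~\cite{L2}; it is energy-free. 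With that bound, your own splitting works: the optimum becomes $T\sim e_{p,m}(u)^{\alpha/(p\alpha-1)}$ and the total exponent $\frac{\alpha(q-1)}{p\alpha-1}$, which tends to $\frac{n(q-1)}{np-n+m}$ as $\alpha\uparrow\frac{n}{n-m}$, with $\epsilon$ absorbing the strict inequality — precisely the paper's conclusion.

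Your proposed rescue of the exponent via the rate-$(p+1)$ decay of Theorem~\ref{thm_lq}(1) is the second gap, and it is circular: the constant $C(u)$ there is \emph{not} a power of $e_{p,m}(u)$. It is produced by running Proposition~\ref{elementary} on integrals of the form $\int_0^\infty t^{q-1}V(\{u<-t\})\,dt$, i.e., essentially on $\int_X(-u)^q\omega^n$ itself — the very quantity the Sobolev inequality is supposed to control — so ``tracking how the constants depend on $e_{p,m}(u)$'' through that iteration is not possible. (Even granting, hypothetically, $C(u)\sim e_{p,m}(u)$ in Theorem~\ref{thm_lq}(1), your shallow term's factor $e_{p,m}(u)^{1/p}$ would still leave a non-sharp exponent near $q=1$.) So your closing diagnosis is inverted: the rate-$p$ capacity decay of Lemma~\ref{cap}(2) is fully sufficient for the deep piece, and the weak exponent $q/p$ you feared comes not from it but from your shallow estimate; the sharp value $\frac{n(q-1)}{np-n+m}$ is reached by making the second (low-moment) factor uniform, not by upgrading the capacity decay.
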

\begin{proof}
Take $u\in \mathcal E_m^p(X,\omega)$, $\sup_X u=-1$, and fix $q<\frac {pn}{n-m}$. Also, let $q<Q<p\alpha$, where $1<\alpha<\frac {n}{n-m}$. Then we have by Lemma~\ref{cap}
\[
\begin{aligned}
&\int_X(-u)^q\omega^n=q\int_0^{\infty}t^{q-1}V(\{u<-t\})\,dt\\
&\leq q\left(\int_0^{\infty}t^{Q-1}V(\{u<-t\})\,dt\right)^{\frac{q-1}{Q-1}}\left(\int_0^{\infty}V(\{u<-t\})\,dt\right)^{\frac{Q-q}{Q-1}}\\
&\leq q\left(\frac {2^Q}{Q}+\int_2^{\infty}t^{Q-1}e_{p,m}(u)^{\alpha}(t-1)^{-p\alpha }\,dt\right)^{\frac{q-1}{Q-1}}\left(\int_X(-u)\omega^n\right)^{\frac{Q-q}{Q-1}}\\
&\leq Ce_{p,m}(u)^{\alpha\frac{q-1}{Q-1}}\left(\int_X(-u)\omega^n\right)^{\frac{Q-q}{Q-1}}.
\end{aligned}
\]
It follows from~\cite{L2} that if $u$ is $(\omega,m)$-subharmonic function such that $\sup_Xu=-1$, then there exists a constant $C'$ that does not depending on $u$ such that $\sup_j\int_X(-u)\omega^n\leq C'$.
Note that
\[
\inf_{\alpha,Q}\left\{\left(\alpha\frac{q-1}{Q-1}\right): q<Q<p\alpha \text{ and } 1<\alpha<\frac {n}{n-m}\right\}=\frac {n(q-1)}{np-n+m}\, .
\]
Therefore, for any $\epsilon >0$ there exists constant $C(\epsilon)$ that does not depending on $u$ such that
\[
\int_X(-u)^q\omega^n\leq C(\epsilon)e_{p,m}(u)^{\frac {n(q-1)}{np-n+m}+\epsilon}\, .
\]

\end{proof}

At the end of this section we can prove the following partial characterization of negative $(\omega,m)$-subharmonic functions with bounded $(p,m)$-energy.

\begin{proposition}
Let $X$ be a connected and compact K\"ahler manifold of complex dimension $n$, where $\omega$ is a K\"ahler form on $X$ such that $\int_{X}\omega^n=1$. Also let $1\leq m\leq n$ and $p>0$. Then
\[
\left\{u\in \mathcal {SH}^{-}_m(X,\omega): \int_0^{\infty}t^{m+p-1}\operatorname{cap}_m(\{u<-t\})\,dt<\infty\right\}\subset \mathcal E_m^p(X,\omega).
\]
In particular, if for $u\in \mathcal {SH}^{-}_m(X,\omega)$ there exist constants $C(u)>0$ and $\epsilon >0$ such that
\[
\operatorname{cap}_m(\{u<-t\})\leq \frac {C(u)}{t^{p+m+\epsilon}},
\]
then $u\in \mathcal E_m^p(X,\omega)$.
\end{proposition}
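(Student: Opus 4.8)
The plan is to verify the membership $u \in \mathcal E_m^p(X,\omega)$ through the canonical approximation $u_j=\max(u,-j)$ together with the characterization recorded in the remark following Definition~\ref{defEpm}: it suffices to show that $\sup_j e_{p,m}(u_j)<\infty$. Each $u_j$ is bounded, hence belongs to $\mathcal E_m(X,\omega)$ and has finite $(p,m)$-energy, so the quantities $e_{p,m}(u_j)$ are well defined; the entire task is to bound them uniformly in $j$ by the hypothesized integral.

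First I would rewrite the energy by the layer-cake (distribution function) formula
\[
e_{p,m}(u_j)=\int_X(-u_j)^p\operatorname{H}_m(u_j)=p\int_0^{\infty}t^{p-1}\operatorname{H}_m(u_j)(\{u_j<-t\})\,dt .
\]
Since $u_j=\max(u,-j)\geq -j$, the sublevel set $\{u_j<-t\}$ is empty for $t\geq j$ and coincides with $\{u<-t\}$ for $0<t<j$; thus the integral runs only over $[0,j]$ and $\operatorname{H}_m(u_j)(\{u_j<-t\})=\int_{\{u<-t\}}\operatorname{H}_m(u_j)$. Next I apply the right-hand inequality of (\ref{1}) in Lemma~\ref{cap} to the bounded function $u_j$, with $s=t$, obtaining $\int_{\{u_j<-t\}}\operatorname{H}_m(u_j)\leq t^m\operatorname{cap}_m(\{u_j<-t\})=t^m\operatorname{cap}_m(\{u<-t\})$ for $0<t<j$. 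Inserting this estimate gives
\[
e_{p,m}(u_j)\leq p\int_0^{j}t^{p+m-1}\operatorname{cap}_m(\{u<-t\})\,dt\leq p\int_0^{\infty}t^{p+m-1}\operatorname{cap}_m(\{u<-t\})\,dt ,
\]
where the last bound is independent of $j$ and finite by hypothesis. Hence $\sup_j e_{p,m}(u_j)<\infty$, and the remark yields $u\in\mathcal E_m^p(X,\omega)$.

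For the ``in particular'' assertion I would check that the power-law bound forces the defining integral to converge. Splitting $\int_0^{\infty}=\int_0^{1}+\int_1^{\infty}$, on $[1,\infty)$ the hypothesis gives integrand $\leq C(u)\,t^{p+m-1}t^{-(p+m+\epsilon)}=C(u)\,t^{-1-\epsilon}$, which is integrable at infinity; on $[0,1]$ I use the universal bound $\operatorname{cap}_m(\{u<-t\})\leq\operatorname{cap}_m(X)=1$ (valid because $\int_X\operatorname{H}_m(v)=\int_X\omega^n=1$ for bounded $v$ by Stokes' theorem), so the integrand is dominated by $t^{p+m-1}$, which is integrable near $0$ as $p+m-1>-1$. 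Thus the integral $\int_0^{\infty}t^{m+p-1}\operatorname{cap}_m(\{u<-t\})\,dt$ is finite and the first part applies.

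The computation is essentially routine; the only points requiring care are the identification $\{u_j<-t\}=\{u<-t\}$ for $t<j$, which localizes the estimate precisely on the range where (\ref{1}) produces the factor $t^m$, and the uniformity of the bound in $j$, secured by enlarging the domain of integration from $[0,j]$ to $[0,\infty)$. The main conceptual input is Lemma~\ref{cap}, which converts Hessian mass on a sublevel set into $m$-capacity; everything else is monotone bookkeeping.
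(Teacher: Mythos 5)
Your overall strategy is sound, but there is one genuine flaw: you invoke the right-hand inequality of (\ref{1}) in Lemma~\ref{cap} with $s=t$ for \emph{all} $0<t<j$, and that inequality only holds for $s\geq 1$. The lemma's statement says ``$s>0$'', but its proof explicitly takes $1\leq s<s_0$ (it needs $\frac{1}{s_0}\max(u,-s_0)$ to be a capacity competitor, i.e.\ $s_0\geq 1$), and for small $s$ the inequality is simply false: take $u\equiv -\frac12$, so that $\operatorname{H}_m(u)=\omega^n$; for $s=\frac14$ the left side equals $1$, while the right side is $4^{-m}\operatorname{cap}_m(X)=4^{-m}<1$. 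Consequently your chain $e_{p,m}(u_j)\leq p\int_0^{j}t^{p+m-1}\operatorname{cap}_m(\{u<-t\})\,dt$ is unjustified on the range $t\in(0,1)$. The repair is immediate and preserves uniformity in $j$: split the layer-cake integral at $t=1$ and on $(0,1)$ use $\operatorname{H}_m(u_j)(\{u_j<-t\})\leq\operatorname{H}_m(u_j)(X)=1$, which gives $e_{p,m}(u_j)\leq 1+p\int_1^{\infty}t^{p+m-1}\operatorname{cap}_m(\{u<-t\})\,dt$. (This also matches the paper's own normalization: its proof first reduces to $u\leq -1$, so only $t\geq 1$ ever enters.)

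With that patch your route is correct and genuinely different in structure from the paper's. The paper never cites Lemma~\ref{cap} here; it re-derives the capacity comparison by hand, observing that $v=\frac{1}{t}\max(u,-t)$ satisfies $-1\leq v\leq 0$ and $\omega_v^m\wedge\omega^{n-m}\geq t^{-m}\omega_{u_t}^m\wedge\omega^{n-m}$, and --- more importantly --- it verifies membership in $\mathcal E_m(X,\omega)$ \emph{directly}: via Proposition~\ref{elementary} the hypothesis forces $\operatorname{cap}_m(\{u<-t\})\leq Ct^{-m-p}$, hence $\operatorname{H}_m(u_t)(\{u<-t\})\leq t^m\operatorname{cap}_m(\{u<-t\})\to 0$, which is exactly the no-mass-loss criterion for $\mathcal E_m(X,\omega)$; only then does it estimate the energy, using the identity $\operatorname{H}_m(u)(\{u\leq -t\})=\operatorname{H}_m(u_t)(\{u\leq -t\})$. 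You instead outsource membership entirely to the remark after Definition~\ref{defEpm} ($\sup_je_{p,m}(u_j)<\infty$ implies $u\in\mathcal E_m^p(X,\omega)$), so you never need Proposition~\ref{elementary} or the pointwise decay of the capacity --- a cleaner, more modular argument. Be aware, though, that if one reads that remark strictly against the paper's definition of $\mathcal E_m^p(X,\omega)$ as a subset of $\mathcal E_m(X,\omega)$, the implication for a general $u\in\mathcal{SH}^-_m(X,\omega)$ requires the full-class membership proved in the cited references; the paper's Proposition~\ref{elementary} step is precisely the self-contained substitute for that dependency.
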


\begin{proof}
Let $u\in \mathcal {SH}^{-}_m(X,\omega)$, and assume that
\[
\int_0^{\infty}t^{m+p-1}\operatorname{cap}_m(\{u<-t\})\,dt<\infty\, .
\]
Then without lost of generality we can assume that $u\leq -1$. Let us define $u_t=\max (u,-t)$, $t\geq 1$, then $v=\frac {u_t}{t}\in \mathcal {SH}_m(X,\omega)$, $-1\leq v\leq 0$, and
\[
\omega_v^m\wedge \omega^{n-m}\geq t^{-m}\omega_{u_t}^m\wedge \omega^{n-m}.
\]
We then have by Proposition~\ref{elementary}
\begin{multline*}
(\omega_{u_t}^m\wedge \omega^{n-m})(\{u<-t\})\leq t^m(\omega_v^m\wedge \omega^{n-m})(\{u<-t\}) \\ \leq t^m\operatorname{cap}_m(\{u<-t\})\to 0, \text{ as } t\to \infty\, .
\end{multline*}
Thus,  $u\in \mathcal E_m(X,\omega)$. Furthermore,
\begin{multline*}
(\omega_{u}^m\wedge \omega^{n-m})(\{u\leq-t\})=\int_X\omega^n-(\omega_{u}^m\wedge \omega^{n-m})(\{u>-t\})\\
=\int_X\omega^n-(\omega_{u_t}^m\wedge \omega^{n-m})(\{u>-t\})=(\omega_{u_t}^m\wedge \omega^{n-m})(\{u\leq-t\}).
\end{multline*}
Finally,
\begin{multline*}
\int_X(-u)^p\operatorname{H}_m(u)=p\int_1^{\infty}t^{p-1}(\omega_{u}^m\wedge \omega^{n-m})(\{u<-t\})dt\\
\leq p\int_1^{\infty}t^{p-1}(\omega_{u_t}^m\wedge \omega^{n-m})(\{u\leq-t\})dt\leq p \int_1^{\infty}t^{m+p-1}\operatorname{cap}_m(\{u\leq-t\})\,dt<\infty.
\end{multline*}
\end{proof}

\section{The complex Hessian equations}

In this section we consider complex Hessian equations for $\mathcal E_m^p(X,\omega)$. We need the following generalization of Theorem~\ref{sobolev}.

\begin{theorem}\label{lemmaep} Let $n\geq 2$, $p>0$, and let $1\leq m\leq n$. Assume that $(X,\omega)$ is a connected and compact K\"ahler manifold of complex dimension $n$, where $\omega$ is a K\"ahler form on $X$ such that $\int_{X}\omega^n=1$. Furthermore, assume that $\mu$ is a Borel measure defined on $X$. Fix a constant $\beta$ such that $1>\beta>\max\left(\frac {pn-n}{pn-n+m},\frac {p}{p+1}\right)$, for $p>1$, and $\beta=\frac {p}{p+1}$ for $p\leq 1$. The following conditions are then equivalent:
\begin{enumerate}\itemsep2mm
\item $\mathcal E_m^p(X,\omega)\subset L^q(X,\mu)$;
\item there exists a constant $C>0$ such that for all $u\in \mathcal E_m(X,\omega)\cap L^{\infty}(X)$ with $\sup_Xu=-1$ it holds
\[
\int_X(-u)^q\,d\mu\leq Ce_{p,m}(u)^{\frac {q\beta}{p}};
\]
\item there exists a constant $C>0$ such that for all $u\in \mathcal E_m^p(X,\omega)$ with $\sup_Xu=-1$ it holds
\[
\int_X(-u)^q\,d\mu\leq Ce_{p,m}(u)^{\frac {q\beta}{p}}.
\]
\end{enumerate}
\end{theorem}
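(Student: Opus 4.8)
The implications $(3)\Rightarrow(1)$ and $(2)\Rightarrow(3)$ are the routine directions, while $(1)\Rightarrow(2)$ carries the analytic weight. I expect the main obstacle to be $(1)\Rightarrow(2)$, where one must extract a \emph{uniform} constant $C$ from the qualitative inclusion $\mathcal E_m^p(X,\omega)\subset L^q(X,\mu)$; this is a closed-graph/Baire-category type argument combined with the scaling structure of the energy $e_{p,m}$, and the specific lower bound on $\beta$ will be forced precisely here.

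\textbf{For $(3)\Rightarrow(1)$:} This is nearly immediate. If $u\in\mathcal E_m^p(X,\omega)$ is arbitrary with $u\not\equiv 0$, set $\tilde u = u/(-\sup_X u)$ so that $\sup_X\tilde u=-1$. By the scaling property $e_{p,m}(c\,u)=c^{p+m}$-type homogeneity (more precisely, rescaling pulls out an explicit power, cf.\ the computations in Lemma~\ref{lem7}), the finiteness $e_{p,m}(\tilde u)<\infty$ follows from $e_{p,m}(u)<\infty$, and then inequality $(3)$ gives $\int_X(-\tilde u)^q\,d\mu<\infty$, hence $\int_X(-u)^q\,d\mu<\infty$. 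Thus $u\in L^q(X,\mu)$.

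\textbf{For $(1)\Rightarrow(2)$:} This is where I would spend the effort. Suppose no uniform $C$ exists; then there is a sequence $u_j\in\mathcal E_m(X,\omega)\cap L^\infty(X)$ with $\sup_X u_j=-1$ and $\int_X(-u_j)^q\,d\mu > 4^j\, e_{p,m}(u_j)^{q\beta/p}$. After normalizing each $u_j$ (dividing by a suitable scalar so that its $(p,m)$-energy is controlled), I would form the aggregated function $u=\sum_j 2^{-j}c_j u_j$ and invoke the construction in the Remark following Lemma~\ref{lem7}, which shows that such a convex-type combination lies in $\mathcal E_m^p(X,\omega)$ provided the energies $e_{p,m}(u_j)$ are uniformly bounded after rescaling. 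The point is that $u\leq 2^{-j}c_j u_j$ for each $j$, so $\int_X(-u)^q\,d\mu\geq 2^{-jq}c_j^q\int_X(-u_j)^q\,d\mu$, and the divergence of the right-hand side contradicts $u\in L^q(X,\mu)$, which is guaranteed by hypothesis $(1)$. The role of the constraint $\beta>\max\big(\tfrac{pn-n}{pn-n+m},\tfrac{p}{p+1}\big)$ is to guarantee that the exponent $q\beta/p$ in the target inequality is compatible with the Sobolev exponent from Theorem~\ref{sobolev}; specifically, Theorem~\ref{sobolev} gives $\int_X(-u)^q\omega^n\leq C(\epsilon)e_{p,m}(u)^{n(q-1)/(np-n+m)+\epsilon}$, and one checks that $n(q-1)/(np-n+m) < q\beta/p$ exactly when $\beta$ exceeds the stated threshold, so the normalization constants $c_j$ stay summable and the aggregation converges inside the energy class.

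\textbf{For $(2)\Rightarrow(3)$:} This follows by approximation. Given $u\in\mathcal E_m^p(X,\omega)$ with $\sup_X u=-1$, take the canonical decreasing approximation $u_k=\max(u,-k)\in\mathcal E_m(X,\omega)\cap L^\infty(X)$; these satisfy $\sup_X u_k=-1$ for $k\geq 1$. Applying $(2)$ to each $u_k$ gives $\int_X(-u_k)^q\,d\mu\leq C\,e_{p,m}(u_k)^{q\beta/p}$. By the Remark following Definition~\ref{defEpm} we have $e_{p,m}(u_k)\to e_{p,m}(u)$, and since $(-u_k)^q\nearrow(-u)^q$ pointwise, the monotone convergence theorem passes the left-hand side to $\int_X(-u)^q\,d\mu$. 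Combining yields $\int_X(-u)^q\,d\mu\leq C\,e_{p,m}(u)^{q\beta/p}$, which is $(3)$. The only subtlety is ensuring the passage to the limit on the energy side respects the exponent $q\beta/p$, which is harmless since the map $t\mapsto t^{q\beta/p}$ is continuous.
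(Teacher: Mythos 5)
Your skeleton is the paper's: a contradiction argument with a sequence $u_j$ violating the bound, rescaling by a power of the energy, aggregation $u=\sum_j 2^{-j}c_ju_j$ via the Remark after Lemma~\ref{lem7}, and your (2)$\Rightarrow$(3) by canonical approximation is exactly what the paper does. But the step carrying the analytic weight in (1)$\Rightarrow$(2) --- verifying that the rescaled functions $c_ju_j$ have \emph{uniformly bounded} $(p,m)$-energy --- is wrong as you have written it, in two ways. First, $e_{p,m}$ has no homogeneity on a compact manifold: $\omega_{tu}=t\omega_u+(1-t)\omega$, so there is no ``explicit power'' pulled out by rescaling; one must expand multinomially,
\[
e_{p,m}(tu)=\sum_{k=0}^m\binom{m}{k}t^{p+k}(1-t)^{m-k}\int_X(-u)^p\,\omega_u^k\wedge\omega^{n-k},
\]
and this same failure of homogeneity also invalidates your (3)$\Rightarrow$(1) normalization $\tilde u=u/(-\sup_Xu)$: when $-\sup_Xu<1$ this multiplies by $t>1$, which does not preserve $(\omega,m)$-subharmonicity (Proposition~\ref{prop} allows only $t\in[0,1]$, and the case $\sup_Xu=0$ is undefined); the translation $u\mapsto u-1-\sup_Xu$, which leaves $\operatorname{H}_m$ unchanged, is the correct fix. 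Second, and more seriously, your exponent check is the wrong one: you apply Theorem~\ref{sobolev} with exponent $q$ and claim the relevant condition is $n(q-1)/(np-n+m)<q\beta/p$. This is not implied by the stated hypothesis on $\beta$: for $p=2$, $n=2$, $m=1$, $q=8$ it would force $\beta>7/6$, impossible since $\beta<1$ (and Theorem~\ref{sobolev} is not even applicable there, since it requires $q<pn/(n-m)=4$). Since $q$ is arbitrary in the theorem, your route collapses for large $q$.

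The correct mechanism, with $t_j=e_{p,m}(u_j)^{-\beta/p}$ (chosen so that $t_j^{q}\,e_{p,m}(u_j)^{q\beta/p}=1$ and the lower bound $4^{jq}$ survives the rescaling; note $e_{p,m}(u_j)\geq 1$ because $u_j\leq -1$ and $\int_X\operatorname{H}_m(u_j)=1$, which your divergence claim tacitly needs), is this: in the expansion above, the $k=0$ term is handled by Theorem~\ref{sobolev} applied with exponent $p$ (not $q$), giving $\int_X(-u_j)^p\omega^n\leq C\,e_{p,m}(u_j)^{\frac{n(p-1)}{np-n+m}+\epsilon}\leq C\,e_{p,m}(u_j)^{\beta}$, so that $t_j^p\int_X(-u_j)^p\omega^n\leq C$ --- this is precisely where $\beta>\frac{pn-n}{pn-n+m}$ enters; the terms with $k\geq 1$ are bounded via Lemma~\ref{lem7} by $Ct_j^{p+1}e_{p,m}(u_j)=C\,e_{p,m}(u_j)^{1-\beta(p+1)/p}\leq C$ --- precisely where $\beta\geq\frac{p}{p+1}$ enters. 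Your proposal never performs this expansion, so the two branches of the $\max$ never play their separate roles. Finally, you do not treat $p\leq 1$, where $\beta=\frac{p}{p+1}$ and Theorem~\ref{sobolev} (stated only for $p\geq 1$, $q>1$) is unavailable; the paper replaces it there by the uniform bound $\int_X(-u)^p\omega^n\leq\bigl(\int_X(-u)\,\omega^n\bigr)^p\leq (C')^p$ from~\cite{L2}. So: right architecture, but the energy estimate for the rescaled sequence --- the heart of the proof --- is missing and the justification you offer in its place is incorrect.
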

\begin{proof}
The implication (2)$\Rightarrow$(1) is obvious. The equivalence (2)$\Leftrightarrow$(3) follows by approximation. We shall prove (1)$\Rightarrow$(2).

Assume first that $p>1$. To prove this implication assume that condition (2) is not true, i.e., there exists a sequence $u_j\in \mathcal E_m(X,\omega)\cap L^{\infty}(X)$, $\sup_Xu_j=-1$, such that
\begin{equation}\label{6}
\int_X(-u_j)^q\,d\mu\geq 4^{jq}e_{p,m}(u_j)^{\frac {q\beta}{p}}.
\end{equation}

\emph{Case 1.} If the sequence $\{e_{p,m}(u_j)\}$ is bounded (or it contains a bounded subsequence), then let us define
\[
u=\sum_{j=1}^{\infty}\frac {1}{2^j}u_j.
\]
Then $u$ belongs to $\mathcal {SH}_m(X,\omega)$, and by Lemma~\ref{lem7} it follows
\[
\int_X(-u)^p\operatorname{H}_m(u)\leq C(p,m)\sup_{j\in \mathbb N}e_{p,m}(u_j)<\infty .
\]
Hence, $u\in \mathcal E_m^p(X,\omega)$. On the other hand by (\ref{6})
%(note also that $e_{p,m}(u_j)\geq 1$, since $\sup_Xu_j=-1$)
\[
\int_X(-u)^q\,d\mu\geq \frac {1}{2^{jq}}\int_X(-u_j)^q\,d\mu\geq \frac {1}{2^{jq}}4^{jq}e_{p,m}(u_j)^{\frac{q\beta}{p}}\geq 2^{jq}\to \infty, \qquad \text{ as } j\to \infty.
\]
Thus,  $u\notin L^q(X,\mu)$.

\emph{Case 2.} Now assume that $e_{p,m}(u_j)\to \infty$. Let us define $v_j=t_ju_j$, where
\begin{equation}\label{t}
t_j=e_{p,m}(u_j)^{-\frac {\beta}{p}}.
\end{equation}
Then we have by Theorem~\ref{sobolev} and Lemma~\ref{lem7}
\begin{multline*}
e_{p,m}(v_j)=t_j^p\int_X(-u_j)^p\omega^n+t_j^p\sum_{k=1}^{m}\binom {m}{k}t_j^k(1-t_j)^{m-k}\int_X(-u_j)^p\omega_{u_j}^k\wedge \omega^{n-k}\\
\leq t_j^p\int_X(-u_j)^p\omega^n+C2^mt_j^{p+1}e_{p,m}(u_j)\\
\leq C't_j^pe_{p,m}(u_j)^{\beta}+C2^mt_j^{p+1}e_{p,m}(u_j) <+\infty.
\end{multline*}
Therefore, we can repeat the argument from the first case to show that function
\[
v=\sum_{j=1}^{\infty}\frac {1}{2^j}v_j
\]
belongs to $\mathcal {SH}^p_m(X,\omega)$, but $v\notin L^q(X,\mu)$, since
\begin{multline*}
\int_X(-v)^q\,d\mu\geq \frac {1}{2^{jq}}t_j^q\int_X(-u_j)^q\,d\mu \\
\geq \frac {1}{2^{jq}}4^{jq}t_j^qe_{p,m}(u_j)^{\frac{q\beta}{p}}=2^{jq}\to \infty, \text{ as } j\to \infty.
\end{multline*}

Next, assume that  $p\leq 1$ and $\beta=\frac {p}{p+1}$. By~\cite{L2} it follows that if $u$ is $(\omega,m)$-subharmonic function such that $\sup_Xu=-1$, then there exists a constant $C'$ which does not depending on $u$ such that
\[
\int_X(-u)^p\omega^n\leq\left(\int_X(-u)\omega^n\right)^{p}\leq (C')^p,
\]
and then we repeat the above proof for the case when $p>1$.
\end{proof}
%\begin{remark} \textbf{Rafal, can you please help me re-write this remark. I do not fully understand what you want to say} Lemma~\ref{lemmaep} is a generalization of the Sobolev inequality, and proved in Theorem~\ref{sobolev} for arbitrary measure. It also generalize Theorem~\ref{sobolev} that we get the uniform estimation for $L^q$ norm of the functions from $\mathcal E_{p,m}(X,\omega)$ for wider range of admissible $q$, if $p\geq 1$. Furthermore, the Sobolev inequality for $(\omega,m)$-subharmonic functions with bounded $(p,m)$-energy is also valid for $p<1$.
%\end{remark}

By making the best use of Theorem~\ref{lemmaep} we prove the following theorem. Theorem~\ref{Dirichlet} was in the case $p=1$ proved in~\cite{LN}.

\begin{theorem}\label{Dirichlet} Let $n\geq 2$, $p>0$, and let $1\leq m\leq n$. Assume that $(X,\omega)$ is a connected and compact K\"ahler manifold of complex dimension $n$, where $\omega$ is a K\"ahler form on $X$ such that $\int_{X}\omega^n=1$. Furthermore, assume that $\mu$ is a Borel probability measure defined on $X$. The following conditions
are then equivalent:
\begin{enumerate}\itemsep2mm
\item $\mathcal E_m^p(X,\omega)\subset L^p(X,\mu)$;
\item there exists unique $(\omega,m)$-subharmonic function $u$ in $\mathcal E_m^p(X,\omega)$ such that $\sup_X u=-1$ and $\operatorname{H}_m(u)=\mu$.
\end{enumerate}
\end{theorem}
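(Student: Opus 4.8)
The plan is to prove the elementary implication (2)$\Rightarrow$(1) directly from Lemma~\ref{lem7}, and to obtain (1)$\Rightarrow$(2) by combining the solvability of the Dirichlet problem in $\mathcal E_m(X,\omega)$ (Theorem~\ref{cp}~(5)) with the a priori estimate supplied by Theorem~\ref{lemmaep}. For (2)$\Rightarrow$(1), suppose $u\in \mathcal E_m^p(X,\omega)$ solves $\operatorname{H}_m(u)=\mu$. Given any $w\in \mathcal E_m^p(X,\omega)$, I would apply Lemma~\ref{lem7} with $u_0=w$ and $u_1=\dots=u_m=u$, obtaining $\int_X(-w)^p\,d\mu=\int_X(-w)^p\,\omega_u^m\wedge\omega^{n-m}\le C\,e_{p,m}(u)<\infty$, so $w\in L^p(X,\mu)$ and hence $\mathcal E_m^p(X,\omega)\subset L^p(X,\mu)$.

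For (1)$\Rightarrow$(2), I would first record that condition (1) forces $\mu$ to put no mass on $m$-polar sets: were $A$ $m$-polar with $\mu(A)>0$, then choosing $v\in \mathcal E_m^p(X,\omega)$ with $v\equiv-\infty$ on $A$ (every $m$-polar set lies in the $-\infty$-locus of such a function) would give $\int_X(-v)^p\,d\mu\ge\int_A(-v)^p\,d\mu=\infty$, contradicting $v\in L^p(X,\mu)$. Since $\mu$ is then a probability measure not charging $m$-polar sets, Theorem~\ref{cp}~(5) produces $u\in \mathcal E_m(X,\omega)$ with $\operatorname{H}_m(u)=\mu$; because $\operatorname{H}_m$ is unchanged under adding constants, I normalize so that $\sup_X u=-1$. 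The remaining task is to show $e_{p,m}(u)<\infty$, and uniqueness will then be inherited from Theorem~\ref{cp}~(5) together with this normalization.

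The heart of the argument is a self-improvement. Applying Theorem~\ref{lemmaep} with $q=p$, condition (1) is equivalent to its condition (2), so there is $C>0$ with $\int_X(-v)^p\,d\mu\le C\,e_{p,m}(v)^{\beta}$ for all $v\in \mathcal E_m(X,\omega)\cap L^\infty(X)$ with $\sup_X v=-1$, where $\beta<1$. I would test this on the canonical approximants $u_k=\max(u,-k)$, which are bounded with $\sup_X u_k=-1$. The key identity is $e_{p,m}(u_k)=\int_X(-u_k)^p\,d\mu$: on the open set $\{u>-k\}$ one has $u_k=u$ and, by Theorem~\ref{cp}~(3), $\operatorname{H}_m(u_k)=\operatorname{H}_m(u)=\mu$, while on $\{u\le-k\}$ one has $(-u_k)^p=k^p$ and $\operatorname{H}_m(u_k)(\{u\le-k\})=\mu(\{u\le-k\})$ since both measures have total mass one and agree on $\{u>-k\}$. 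Writing $E_k:=e_{p,m}(u_k)$, the identity and the estimate combine to $E_k=\int_X(-u_k)^p\,d\mu\le C\,E_k^{\beta}$; as $\sup_X u_k=-1$ forces $E_k\ge1$, and $\beta<1$, this self-improves to $E_k\le C^{1/(1-\beta)}$ uniformly in $k$. By the remark after Definition~\ref{defEpm}, finiteness of $\sup_k E_k$ yields $u\in \mathcal E_m^p(X,\omega)$.

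The main obstacle is precisely this middle mechanism of turning the inclusion in (1) into a quantitative bound and then propagating it: the decisive feature is that the exponent in Theorem~\ref{lemmaep} is $q\beta/p=\beta<1$, which is exactly what renders $E_k\le C\,E_k^{\beta}$ self-improving into a $k$-independent bound; without $\beta<1$ the canonical approximants could not be controlled uniformly and the whole scheme would collapse. A secondary, more technical point is the opening reduction that (1) excludes $\mu$ charging $m$-polar sets, which rests on the existence of a function in $\mathcal E_m^p(X,\omega)$ that is $-\infty$ on a prescribed $m$-polar set.
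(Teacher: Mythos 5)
Your proposal is correct in substance, but for (1)$\Rightarrow$(2) it takes a genuinely different and in fact more direct route than the paper. The paper does not invoke Theorem~\ref{cp}~(5) on $\mu$ itself; instead it decomposes $\mu=f\nu$ with $\nu$ dominated by $\operatorname{cap}_m$, solves $\operatorname{H}_m(u_j)=c_j\min(f,j)\nu$ for the truncated measures (citing~\cite{LN}), obtains uniform bounds $\sup_j e_{p,m}(u_j)<\infty$ by applying Theorem~\ref{lemmaep} twice (first to the bounded approximants $u_{j,k}=\max(u_j,-k)$ to show $u_j\in\mathcal E_m^p(X,\omega)$, then to the $u_j$), and passes to the limit through the upper envelopes $v_j=(\sup_{k\geq j}u_k)^*$, getting $\operatorname{H}_m(u)\geq\mu$ and then equality by comparing total masses. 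You instead solve once, for $\mu$ itself, via Theorem~\ref{cp}~(5), and then prove the solution has finite energy by testing the Sobolev inequality on its canonical approximants $u_k=\max(u,-k)$; the engine is your identity $e_{p,m}(u_k)=\int_X(-u_k)^p\,d\mu$, which is correct: Theorem~\ref{cp}~(3) applied with the constant function $-k\in\mathcal E_m(X,\omega)$ gives $\chi_{\{u>-k\}}\operatorname{H}_m(u_k)=\chi_{\{u>-k\}}\operatorname{H}_m(u)=\chi_{\{u>-k\}}\mu$, and the masses on $\{u\leq-k\}$ then agree because both measures are probabilities. Combined with $E_k\geq1$ (from $\sup_Xu_k=-1$ and unit total mass) and $E_k\leq k^p<\infty$, the bound $E_k\leq CE_k^{\beta}$ with $\beta<1$ indeed self-improves to a $k$-independent bound, and the remark after Definition~\ref{defEpm} yields $u\in\mathcal E_m^p(X,\omega)$. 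You correctly identify $\beta<1$ as the decisive feature — that is exactly why the paper restricts $\beta$ as it does. What your route buys is economy: one application of Theorem~\ref{lemmaep} and no truncation scheme or envelope limit. What the paper's route buys is independence from the full strength of the black box in Theorem~\ref{cp}~(5): it only needs the solvability results of~\cite{LN} for capacity-dominated measures with bounded density, which is closer to self-contained. Your uniqueness remark (uniqueness up to additive constants from Theorem~\ref{cp}~(5), pinned down by $\sup_Xu=-1$) matches the paper. Your (2)$\Rightarrow$(1) via Lemma~\ref{lem7} is the paper's argument verbatim; note only that the constant there should be read as controlling $\max_{j=0,1,\dots,m}e_{p,m}(u_j)$, so the bound is $C\max\left(e_{p,m}(w),e_{p,m}(u)\right)$, which is still finite since $w\in\mathcal E_m^p(X,\omega)$.

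One step deserves a warning. To show that (1) forces $\mu$ to vanish on $m$-polar sets, you assert parenthetically that every $m$-polar set lies in the $-\infty$-locus of a function in $\mathcal E_m^p(X,\omega)$. That is a finite-energy Josefson-type theorem: the known result (from~\cite{LN}) produces $v\in\mathcal{SH}_m(X,\omega)$ with $v=-\infty$ on the set, but with no control on $e_{p,m}(v)$, and nothing in this paper establishes the $\mathcal E_m^p$ version (one could plausibly assemble it from extremal functions of small-capacity neighborhoods via the convex-combination device in the remark after Lemma~\ref{lem7}, but that is a real argument, not a citation). The clean in-paper fix avoids this entirely: apply Theorem~\ref{lemmaep} (1)$\Rightarrow$(3) and test the resulting inequality on the $m$-extremal function $h_{m,E}$ of a Borel set $E$, giving $\mu(E)\leq C\operatorname{cap}_m(E)^{\beta}$ — this is exactly the paper's final proposition, part $b)$ — whence $\mu$ annihilates $m$-polar sets. (To be fair, the paper's own proof of Theorem~\ref{Dirichlet} also asserts this vanishing without justification; the extremal-function argument is what backs it up.) With that substitution your proof is complete.
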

\begin{proof}
Implication (2)$\Rightarrow$(1) follows from Lemma~\ref{lem7}. Next, we shall prove implication (1)$\Rightarrow$(2). To do so let us define the following collection of
Borel probability measures
\[
\mathcal M=\left\{\mu: \mu(X)=1, \mu(K)\leq \operatorname{cap}_m(K), K\subset X\right\}.
\]
It was proved in~\cite{LN} that $\mathcal M$ is convex and compact. Furthermore, for any Borel probability $\mu$ measure we have the following decomposition
\[
\mu=f\nu+\sigma, \ \text{ where } \ \nu\in \mathcal M, \ \sigma\bot \mathcal M, \ f\in L^1(\nu).
\]
If we assume that $\mu$ vanishes on $m$-polar sets, then $\sigma=0$. By assumption $\mu$ is a Borel probability measure defined on $X$ such that $\mathcal E_m^p(X,\omega)\subset L^p(X,\mu)$. Thus, $\mu$ vanishes on $m$-polar sets, so there exist $\nu\in \mathcal M$ and $f\in L^1(\nu)$ such that $\mu=f\nu$. Set
\[
\mu_j=c_j\min(f,j)\nu,
\]
where $c_j>0$ is such that $\mu_j(X)=1$. It follows from~\cite{LN} that there exists $u_j\in \mathcal E_m(X,\omega)$ such that $\operatorname{H}_m(u_j)=\mu_j$ and $\sup_Xu_j=-1$. Without loss of generality we can assume that $u_j\to u$ in $L^1(X)$. Next, we define
\[
u_{j,k}=\max (u_j,-k)\in L^{\infty}(X)\, .
\]
This construction implies that $u_{j,k}\in \mathcal E_m^p(X,\omega)$, and $u_{j,k}\searrow u_j$, as $k\to \infty$. Hence, $e_{p,m}(u_{j,k})\to e_{p,m}(u_j)$, $k\to \infty$. By Theorem~\ref{lemmaep} it follows for some $\beta<1$ that
\[
\int_X(-u_{j,k})^p\operatorname{H}_m(u_j)\leq \int_X(-u_{j,k})^p\,d\mu_j\leq C\left(\int_X(-u_{j,k})^p\operatorname{H}_m(u_{j,k})\right)^{\beta},
\]
and since we have $\int_X(-u_{j,k})^p\operatorname{H}_m(u_j)\to e_{p,m}(u_j)$ and $e_{p,m}(u_{j,k})\to e_{p,m}(u_j)$, as $k\to \infty$, we get
\[
\sup_ke_{p,m}(u_{j,k})<\infty.
\]
Thus, $u_j\in \mathcal E_m^p(X,\omega)$. Theorem~\ref{lemmaep} yields, again for some $\beta<1$, that
 \[
\int_X(-u_{j})^p\operatorname{H}_m(u_j)\leq c_j\int_X(-u_{j})^p\,d\mu\leq c_jC\left(\int_X(-u_{j})^p\operatorname{H}_m(u_{j})\right)^{\beta}.
\]
Thus, $\sup_je_{p,m}(u_{j})<\infty$. Let us define $v_j=\left(\sup_{k\geq j}u_k\right)^*$. Here $(\,)^*$ denotes the upper semicontinuous regularization. Then $v_j$ is a decreasing sequence of function from $\mathcal E_m^p(X,\omega)$, $v_j\searrow u$, $j\to \infty$. Furthermore, since $\sup_je_{p,m}(v_{j})<\infty$,  then we can conclude that $u\in\mathcal E_m^p(X,\omega)$. Then by~\cite{LN} we conclude that $\operatorname{H}_m(v_j)\geq \min(f,j)\nu$, after passing to the limit with $j$ we get $\operatorname{H}_m(u)\geq \mu$, but since both measure $\operatorname{H}_m(u)$ and $\mu$ have the same total mass we conclude that $\operatorname{H}_m(u)=\mu$.
\end{proof}

At the end of this section we shall prove the following proposition.

\begin{proposition}
Assume the same conditions as in Theorem~\ref{Dirichlet}.
\begin{enumerate}\itemsep2mm
\item[$a)$] If there exist constants $\alpha>\frac {p}{p+1}$  and $C>0$ such that for all Borel sets $E$ it holds
\[
\mu(E)\leq C\operatorname{cap}_m(E)^{\alpha},
\]
then $\mathcal E_m^p(X,\omega)\subset L^p(X,\mu)$.
\item[$b)$] If $\mathcal E_m^p(X,\omega)\subset L^p(X,\mu)$,  then for fixed $\beta>\max\left(\frac {pn-n}{pn-n+m},\frac {p}{p+1}\right)$, if $p>1$, and $\beta=\frac {p}{p+1}$ if $p\leq 1$, there exists a constant $C>0$ such that for all Borel sets $E$ it holds
\[
\mu(E)\leq C\operatorname{cap}_m(E)^{\beta}.
\]
\end{enumerate}
\end{proposition}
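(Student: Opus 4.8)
The plan is to handle the two implications separately, using the capacity–measure inequalities as a bridge to the $L^p$-integrability statement via the Sobolev type estimates already established. Throughout I would exploit the sublevel-set estimates of Theorem~\ref{thm_lq}, which provide precise capacity decay $\operatorname{cap}_m(\{u<-t\})\leq C(u)t^{-(p+1)}$ for any $u\in\mathcal E_m^p(X,\omega)$.

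For part $a)$, I would fix $u\in\mathcal E_m^p(X,\omega)$ with, without loss of generality, $u\leq -1$, and estimate $\int_X(-u)^p\,d\mu$ by the layer-cake formula $p\int_0^\infty t^{p-1}\mu(\{u<-t\})\,dt$. The hypothesis $\mu(E)\leq C\operatorname{cap}_m(E)^\alpha$ applied to $E=\{u<-t\}$ together with the capacity bound from Theorem~\ref{thm_lq} gives $\mu(\{u<-t\})\leq C' t^{-\alpha(p+1)}$ for large $t$. The integrand $t^{p-1-\alpha(p+1)}$ is then integrable at infinity precisely when $p-\alpha(p+1)<0$, i.e. when $\alpha>\frac{p}{p+1}$, which is exactly the hypothesis. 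This shows $u\in L^p(X,\mu)$ and hence $\mathcal E_m^p(X,\omega)\subset L^p(X,\mu)$.

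For part $b)$, I would run the argument in the reverse direction, deducing a capacity–measure inequality from the inclusion $\mathcal E_m^p(X,\omega)\subset L^p(X,\mu)$. By Theorem~\ref{lemmaep} (with $q=p$), the inclusion is equivalent to the quantitative estimate $\int_X(-u)^p\,d\mu\leq C e_{p,m}(u)^\beta$ for all $u\in\mathcal E_m^p(X,\omega)$ with $\sup_X u=-1$. Given a Borel set $E$, the idea is to test this inequality against a well-chosen candidate: take $u_E$ to be the relative extremal-type function associated with $E$, i.e. the regularized supremum $u_E=\bigl(\sup\{v\in\mathcal{SH}_m(X,\omega): v\leq 0,\ v\leq -1 \text{ on } E\}\bigr)^*$ (normalized so that $\sup_X u_E=-1$). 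For such a function the Hessian mass concentrates on $E$ and one has $\int_X(-u_E)^p\operatorname{H}_m(u_E)\approx\operatorname{cap}_m(E)$, while $\mu(E)\leq\int_E(-u_E)^p\,d\mu\leq\int_X(-u_E)^p\,d\mu$ since $-u_E\geq 1$ on $E$. Combining these, $\mu(E)\leq C\,e_{p,m}(u_E)^\beta\leq C'\operatorname{cap}_m(E)^\beta$.

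The main obstacle is the second half of part $b)$: one must verify that the extremal function $u_E$ actually lies in $\mathcal E_m^p(X,\omega)$ with its energy $e_{p,m}(u_E)$ comparable to $\operatorname{cap}_m(E)$, and that it can be normalized to $\sup_X u_E=-1$ without destroying these estimates. The comparison between $\int_X(-u_E)^p\operatorname{H}_m(u_E)$ and $\operatorname{cap}_m(E)$ requires that the Hessian measure of $u_E$ be supported essentially on $\{u_E=-1\}\supset E$ (where $(-u_E)^p=1$) and have total mass controlling the capacity; this is the standard but delicate extremal-function machinery. A clean alternative, avoiding the full extremal-function theory, is to reduce to sets of the form $E=\{u<-t\}$ via Theorem~\ref{thm_lq} and a density/approximation argument, and then bootstrap to general Borel sets by inner regularity of $\operatorname{cap}_m$ together with monotonicity of $\mu$; I would pursue whichever route keeps the exponent $\beta$ exactly in the stated range.
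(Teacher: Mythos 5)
Your part $a)$ coincides with the paper's proof: the layer-cake formula for $\int_X(-u)^p\,d\mu$, the hypothesis applied to $E=\{u<-t\}$, the capacity decay $\operatorname{cap}_m(\{u<-t\})\leq C(u)t^{-(p+1)}$ from Theorem~\ref{thm_lq}, and the observation that $\int^{\infty}t^{p-1-\alpha(p+1)}\,dt$ converges exactly when $\alpha>\frac{p}{p+1}$. (Since $C(u)$ depends on $u$, this proves membership $u\in L^p(X,\mu)$ for each $u$, which is all the inclusion requires — same as in the paper.) Your main route for part $b)$ is also the paper's: invoke Theorem~\ref{lemmaep} with $q=p$ to upgrade the inclusion to the quantitative bound $\int_X(-v)^p\,d\mu\leq Ce_{p,m}(v)^{\beta}$, then test it on the extremal function $h_{m,E}$ and use $\mu(E)\leq\int_X(-h_{m,E})^p\,d\mu$.

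The step you flag as the ``main obstacle'' is exactly where the paper inserts a citation rather than new machinery: by~\cite[Section~4]{LN} one has the exact identity $e_{p,m}(h_{m,E})=\operatorname{cap}_m(E)$, not merely comparability. The identity holds for every $p>0$ simultaneously for the reason you anticipated: $\operatorname{H}_m(h_{m,E})$ puts no mass on $\{-1<h_{m,E}<0\}$, and $(-h_{m,E})^p$ equals $1$ on $\{h_{m,E}=-1\}$ and $0$ on $\{h_{m,E}=0\}$, so $e_{p,m}(h_{m,E})=\operatorname{H}_m(h_{m,E})(\{h_{m,E}=-1\})=\operatorname{cap}_m(E)$. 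So to complete your argument you should simply quote this extremal-function result; your proposed fallback would not work. Reducing to sets of the form $\{u<-t\}$ cannot give the claim: the required inequality must be uniform over all Borel sets $E$ with a single constant, whereas the decay from Theorem~\ref{thm_lq} carries a constant $C(u)$ depending on the potential, and a general Borel set is not a sublevel set of any controlled function, so no ``inner regularity plus bootstrap'' recovers the exponent $\beta$. Two further small points: your ``normalization $\sup_X u_E=-1$'' is not literally available, since $-1\leq h_{m,E}\leq 0$ and shifting by a constant destroys this range — the paper applies Theorem~\ref{lemmaep} to $h_{m,E}$ as is (harmless, since the uniform $L^1$ bound from~\cite{L2} used in its proof persists for $-1\leq\sup_X u\leq 0$); and the inequality $\mu(E)\leq\int_X(-h_{m,E})^p\,d\mu$ uses that $h_{m,E}\leq -1$ on $E$ only up to an $m$-polar set, together with the fact that $\mu$ charges no $m$-polar sets, which follows from the inclusion hypothesis.
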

\begin{proof}
\emph{a)} Let $u\in \mathcal E_m^p(X,\omega)$ with $\sup_X u=-1$. From Theorem~\ref{thm_lq} it follows that
\begin{multline*}
\int_X(-u)^p\,d\mu=p\int_1^{\infty}t^{p-1}\mu(\{u<-t\})dt\\
\leq pC\int_1^{\infty}t^{p-1}\operatorname{cap}_m(\{u<-t\})dt
\leq pC\int_1^{\infty}t^{p-1}\left(\frac {C'(u)}{t^{p+1}}\right)^{\alpha}dt\\ =pC(C'(u))^{\alpha}\int_1^{\infty}t^{p-1-\alpha p-\alpha}dt<\infty.
\end{multline*}

\emph{b)} Assume that $\mathcal E_m^p(X,\omega)\subset L^p(X,\mu)$. From Theorem~\ref{lemmaep} it follows that there exists a constant $C>0$ such that for all $v\in \mathcal E_m^p(X,\omega)\cap L^{\infty}(X)$, with $\sup_Xv=-1$, it holds
\begin{equation}\label{13}
\int_X(-v)^p\,d\mu\leq Ce_{p,m}(v)^{\beta}.
\end{equation}
Let $E$ be a Borel set, and let $h_{m,E}$ be the $m$-extremal function for the set $E$, i.e.
\[
h_{m,E}=\big(\sup\left\{u\in \mathcal {SH}_m(X,\omega): u\leq -1 \ \text { on } \ E, u\leq 0 \ \text { on } \ X\right\}\big)^*,
\]
where $(\,)^*$ denotes the upper semicontinuous regularization (see e.g.~\cite[Section~4]{LN} for further information). Using $h_{m,E}$ in (\ref{13}) we arrive at
\[
\mu(E)\leq \int_X(-h_{m,E})^p\,d\mu\leq C e_{p,m}(h_{m,E})^{\beta}=C\operatorname{cap}_m(E)^{\beta}.
\]
\end{proof}

\end{document}